\newtheorem{Theorem}{Theorem}[section]
\newtheorem{Lemma}{Lemma}[section]
\newtheorem{Proposition}{Proposition}[section]
\newtheorem{Corollary}{Corollary}[section]
\theoremstyle{definition}
\theoremstyle{remark}
\newtheorem{Remark}{Remark}[section]
\numberwithin{equation}{section}
\renewcommand{\r}{\rho}
\renewcommand{\u}{{\bf u}}
\newcommand{\R}{{\mathbb R}}
\newcommand{\Dv}{{\rm div}}
\newcommand{\tr}{{\rm tr}}
\newcommand{\dl}{\delta}
\newcommand{\E}{{\mathcal E}}
\def\f{\frac}
\renewcommand{\O}{\Omega}
\def\D{\Delta }
\def\hf1{^\f{1}{1-\xi^2}}
\def\be{\begin{equation}}
\def\en{\end{equation}}
\def\bs{\begin{split}}
\def\es{\end{split}}
\newcommand{\F}{{\mathtt F}}
\begin{document}

\author{Xianpeng Hu and Dehua Wang}
\address{Courant Institute of Mathematical Sciences, New York University,  New York, NY 10012.}
\email{xianpeng@cims.nyu.edu}

\address{Department of Mathematics, University of Pittsburgh, Pittsburgh, PA 15260.}
\email{dwang@math.pitt.edu}

\title[Compressible Viscoelastic Fluids]
{The initial-boundary value problem for the compressible viscoelastic fluids}

\keywords{Compressible viscoelastic fluids, initial-boundary value problem, strong solution, existence.}
\subjclass[2000]{35A05, 76A10,76D03.}
\date{December 14, 2010}

\begin{abstract}
The global existence of strong solution to the initial-boundary value problem of the
three-dimensional compressible viscoelastic fluids near
equilibrium is established in a bounded domain. Uniform estimates in $W^{1,q}$ with $q>3$ on the
density and  deformation gradient   are also obtained. All the
results apply to the two-dimensional case.
\end{abstract}

\maketitle

\section{Introduction}
Elastic solids and viscous fluids are two extremes of material
behavior. Viscoelastic fluids  show intermediate behavior with
some remarkable phenomena  due to their ``elastic" nature. These
fluids exhibit a combination of both fluid and solid
characteristics, keep memory of their past deformations, and their
behaviour is a function of these old deformations. Viscoelastic
fluids have a wide range of applications  and hence have received
a great deal of interest. Examples and applications of
viscoelastic fluids include from oil, liquid polymers, mucus,
liquid soap,  toothpaste,  clay,   ceramics,  gels,  some types of
suspensions, to  bioactive fluids,  coatings and drug delivery
systems for controlled drug release, scaffolds for tissue
engineering, and viscoelastic blood flow  past valves; see
\cite{ KM} for more applications. For the viscoelastic
materials, the competition between the kinetic energy and the
internal elastic energy through the special transport properties
of their respective internal elastic variables makes the materials
more untractable in  understanding their behavior, since any
distortion of microstructures, patterns or configurations in the
dynamical flow will involve the deformation tensor. For classical
simple fluids,  the internal energy can be determined solely by
the determinant of the deformation tensor; however, the internal
energy of complex fluids carries all the information of the
deformation tensor. The interaction between the microscopic
elastic properties and the macroscopic fluid motions  leads to the
rich and complicated rheological phenomena in viscoelastic fluids,
and also causes  formidable analytic and numerical challenges in
mathematical analysis. The equations of the compressible viscoelastic fluids of Oldroyd type
(\cite{Oldroyd1, Oldroyd2}) in three spatial dimensions take the
following form \cite{Joseph, LW, RHN}:
\begin{subequations} \label{e1e}
\begin{align}
&\varrho_t +\Dv(\varrho\u)=0,\label{e1e1}\\
&(\varrho\u)_t+\Dv\left(\varrho\u\otimes\u\right)-\mu\D \u-(\mu+\lambda)\nabla\Dv\u+\nabla P(\varrho)
=\Dv(\varrho \F\F^\top),\label{e1e2}\\
&\F_t+\u\cdot\nabla\F=\nabla\u \, \F,\label{e1e3}
\end{align}
\end{subequations}
where $\varrho$ stands for the density, $\u\in \R^3$ the velocity, and
$\F\in M^{3\times 3}$ (the set of  $3\times 3$ matrices)  the
deformation gradient. The viscosity coefficients $\mu>0$ and $\lambda$ satisfy
$$\mu>0,\quad\textrm{and}\quad 2\mu+3\lambda>0,$$ which ensures that
the operator $-\mu\D \u-(\mu+\lambda)\nabla\Dv\u$ is a strongly elliptic operator. The term $P(\varrho)$ represents the pressure for the barotropic case and is an increasing and convex smooth function of $\varrho >0$ with $P(1)>0$. The symbol $\otimes$
denotes the Kronecker tensor product and $\F^\top$ means the
transpose matrix of $\F$. As usual we call equation \eqref{e1e1}
the continuity equation. For system \eqref{e1e}, the corresponding
elastic energy is chosen to be  the special form of the Hookean
linear elasticity:
$$W(\F)=\frac12|\F|^2,$$
which, however, does not reduce the essential difficulties for
analysis. The methods and results of this paper can be applied to
more general cases.

In this paper, we consider equations \eqref{e1e} in the three-dimensional bounded domain $\O\subset\R^3$ with sufficiently smooth boundary,  subject to the following initial and boundary conditions:
\begin{equation}\label{IC1}
\begin{cases}
(\varrho, \u, \F)|_{t=0}=(\varrho_0(x), \u_0(x), \F_0(x)), \quad x\in\O,\\
\u|_{\partial\O}=0,
\end{cases}
\end{equation}
and we are  interested  in the existence and uniqueness of strong
solution to the initial-boundary value problem \eqref{e1e}-\eqref{IC1} near
its equilibrium state.  Here
the {equilibrium state} of the system \eqref{e1e} is defined as:
$\varrho$ is a positive constant (for simplicity, $\varrho=1$), $\u=0$, and
$\F=I$ (the identity matrix in $M^{3\times 3}$).  We denote the perturbations of the density,  the velocity, and the deformation gradient about the equilibrium by $\r$,  $\u$, and $E$, respectively, that is,
 $$\varrho=1+\r, \qquad \F=I+E.$$
 Then, system \eqref{e1e} becomes equivalently
\begin{subequations} \label{e1}
\begin{align}
&\r_t +\Dv(\r\u)+\Dv\u=0,\label{e11}\\
&((1+\r)\u)_t+\Dv\left((1+\r)\u\otimes\u\right)-\mu\D \u-(\mu+\lambda)\nabla\Dv\u+\nabla P(1+\r) \notag \\
&\qquad\qquad\qquad\qquad =\Dv((1+\r)(I+E)(I+E)^\top),\label{e12}\\
&E_t+\u\cdot\nabla E=\nabla\u \,E+\nabla\u,\label{e13}
\end{align}
\end{subequations}
with the initial and boundary conditions:
\begin{equation}\label{IC}
\begin{cases}
(\r, \u, E)|_{t=0}=(\r_0(x), \u_0(x), E_0(x))=(\varrho_0(x)-1, \u_0(x), \F_0(x)-I),  \quad x\in\O,\\
\u|_{\partial\O}=0.
\end{cases}
\end{equation}
By a {\em strong solution}, we mean a triplet $(\r, \u, E)$
satisfying \eqref{e1} almost everywhere with the initial and boundary conditions
\eqref{IC}, in particular, $(\r,\u,E)(\cdot, t)\in W^{1,q}\times \left(W^{2,q}\right)^3\times\left(W^{1,q}\right)^{3\times 3}$,
$q\in(3,\infty)$ 
 in this paper.

When the density  is a constant, system \eqref{e1e} governs the
homogeneous incompressible viscoelastic fluids, and there exist
rich results in the literature for the global existence of
classical solutions (namely in $H^3$ or other functional spaces
with much higher regularity); see \cite{CM, CZ, LLZH2, LLZH,
LZ2010, LZ, LLZ, LZP, LW} and the references therein. When the
density  is not a constant, the question related to existence
becomes much more complicated. In \cite{HW1} the authors
considered the global existence of classical solutions of small
perturbation near its equilibrium for the compressible
viscoelastic fluids in critical spaces (a functional space in
which the system is scaling invariant), see also \cite{QZ}. For
the local existence of strong solutions with large initial data,
see \cite{HW}. One of the main difficulties in proving the global
existence for compressible viscoelastic fluids is the lacking of
the dissipative estimate for the deformation gradient. To overcome
this difficulty, the authors in \cite{LLZH} introduced an
auxiliary function to obtain the dissipative estimate, while the
authors in \cite{LZ} directly deal with the quantities such as
$\Delta \u+\Dv \F$. Those methods can provide them with some good
estimates, partly because of their high regularity of $(\u,\F)$.
However, in this paper, we deal with the strong solution with much
less regularity for $(\u,\F)\in W^{2,q}\times W^{1,q}$,
$q\in(3,\infty)$, hence those methods do not apply (more
precisely, we can not apply the standard energy method in our
situation). For our purpose, we first obtain a uniform estimate
for the linearized momentum equation using the maximal
regularities of the Stokes equations and parabolic equations, and
then we find that a combination between the velocity and the
convolution of the divergence of the deformation gradient with the
fundamental solution of Laplace operator will develop some good
dissipative estimates which are very useful for the global
existence.

When the initial-boundary value problem  \eqref{e1}-\eqref{IC} is considered, several difficulties need to be addressed:
\begin{enumerate}
 \item For the initial-boundary value problem, the boundary condition can not be prescribed as zero for the density or the deformation gradient, and hence taking the derivatives of equations up to arbitrary orders and integrating by parts can not be applied. To overcome this difficulty, we need to establish a uniform estimate for the linearized momentum equation with the help of the standard $L^p-L^q$ estimates for the Stokes equations and the variants for the heat euqation;
\item The functional framework now is the general $L^q$ space with $q>3$. This setting obviously exclude the classical methods which are the key tools in \cite{HW, LLZH, LLZ, LZP, QZ}. To handle this difficulty, the classical Poincare's inequality is used and a new conserved quantity for \eqref{e1e} will be needed.
\end{enumerate}

One key observation of this work is that under the condition
\eqref{curl1} (see Section 2) on the curl of the deformation gradient initially,
not only the curl of the deformation gradient at any positive time
is a higher order term, but also is the sum of the gradient of
density and the divergence of the deformation gradient (see
\eqref{m}), which motivates us to use the $L^p-L^q$ estimate of
Stokes equations. Furthermore, the divergence of the difference
between the deformation gradient and its transpose is also a
higher order term, see \eqref{q1}.

The viscoelastic fluid system \eqref{e1e} can be regarded as a
combination of the inhomogeneous  compressible Navier-Stokes
equations with the source term $\Dv(\r\F\F^\top)$ and the equation
\eqref{e1e3}.  For the global existence of classical solutions
with small perturbation near an equilibrium for the compressible
Navier-Stokes equations, we refer the reader to \cite{MT1, MT, AI}
and the references cited therein. We remark that,  for the
nonlinear inviscid elastic systems, the existence of solutions
was established by Sideris-Thomases in \cite{ST} under the null
condition; see also \cite{ST2} for a related discussion.

The existence of global weak solutions with large initial data of
\eqref{e1e} is still an outstanding open question. In this
direction for the homogeneous incompressible viscoelastic fluids,
when the contribution of the strain rate (symmetric part of
$\nabla\u$) in the constitutive equation is neglected,
Lions-Masmoudi in \cite{LM} proved the global existence of weak
solutions with large initial data for the Oldroyd model. Also
Lin-Liu-Zhang in \cite{LLZ} proved the existence of global weak
solutions with large initial data for the incompressible
viscoelastic fluids when the velocity satisfies the Lipschitz
condition. When dealing with the global existence of weak
solutions of the viscoelastic fluid system \eqref{e1e} with large
data, the rapid oscillation of the density and the
non-compatibility between the quadratic form and the weak
convergence are  two of the major difficulties.

The rest of the paper is organized as follows. In Section 2, we
recall several intrinsic properties of the system \eqref{e1e} and
also show a new conserved quantity. In Section 3, we introduce the
functional spaces and state our main results, including the local
existence and uniqueness of the strong solution to the system
\eqref{e1}-\eqref{IC}, as well as the global existence. In Section
4, we prove the uniform estimate for the linearized momentum
equation. In Section 5, the main goal is to obtain a series of
uniform estimates for the gradient of the density and the gradient
of the deformation gradient. In Section 6, we establish some
uniform in time {\it a priori} estimates on the dissipation of the
deformation gradient and the density, and finally finish the proof
of our main theorem.

\bigskip

\section{Some Intrinsic Properties of System \eqref{e1e}}

In this section, we  recall some intrinsic properties of system
\eqref{e1e} and introduce a new conserved quantity. First, for the
system \eqref{e1e} it was proved that (see Lemmas 6.1 and 6.2 in
Hu-Wang \cite{HW1}, and  also \cite{QZ}).
\begin{Proposition}\label{p1}
Assume that $(\varrho, \u, \F)$ is a solution of 	 system
\eqref{e1e}. Then the following identities
\begin{equation}\label{Det1}
 \varrho \det(\F)=1,
\end{equation}
\begin{equation}\label{Dv1}
\Dv(\varrho\F^\top)=0,
\end{equation}
and
\begin{equation}\label{curl}
\F_{lk}\nabla_l \F_{ij}=\F_{lj}\nabla_l \F_{ik}
\end{equation}
hold for all time $t> 0$ if they are satisfied initially.
\end{Proposition}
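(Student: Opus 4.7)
Each of the three identities asserts an algebraic constraint on $(\varrho,\F)$ that is preserved by the flow, so for each one the strategy is to show that the quantity of interest --- namely $g:=\varrho\det\F-1$, or $d_j:=\partial_i(\varrho\F_{ij})$, or $C_{ijk}:=\F_{lk}\partial_l\F_{ij}-\F_{lj}\partial_l\F_{ik}$ --- satisfies a linear homogeneous evolution equation of transport/continuity type whose zero solution is propagated in time. No analytic estimates enter; the only delicate point is the index bookkeeping, and in particular two symmetry-driven cancellations mentioned below.

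\textbf{Proof of \eqref{Det1}.} By Jacobi's formula applied to \eqref{e1e3},
\[
(\det\F)_t+\u\cdot\nabla\det\F=\det(\F)\,\tr(\F^{-1}\nabla\u\,\F)=\det(\F)\,\Dv\u.
\]
Multiplying this identity by $\varrho$, multiplying \eqref{e1e1} (written as $\varrho_t+\u\cdot\nabla\varrho+\varrho\,\Dv\u=0$) by $\det\F$, and adding yields $(\varrho\det\F)_t+\u\cdot\nabla(\varrho\det\F)=0$. Hence $\varrho\det\F$ is constant along particle paths and equals its initial value $1$ for all $t>0$.

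\textbf{Proof of \eqref{Dv1}.} Combining \eqref{e1e1} and \eqref{e1e3} gives the conservation-form identity
\[
(\varrho\F_{ij})_t+\partial_k(u_k\varrho\F_{ij})=\varrho\F_{kj}\partial_k u_i.
\]
Applying $\partial_i$ and setting $d_j:=\partial_i(\varrho\F_{ij})$, a Leibniz expansion produces, besides the desired $\partial_t d_j+\Dv(\u d_j)$, two spurious groups of terms: a pair proportional to $\varrho\F_{ij}\,\partial_i\Dv\u$ that matches after relabelling $k\to i$, and a pair of the form $\partial_i u_k\,\partial_k(\varrho\F_{ij})$ that matches under the swap $(i,k)\leftrightarrow(k,i)$. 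These cancellations leave $\partial_t d_j+\Dv(\u d_j)=0$, so $d_j\equiv 0$ whenever it vanishes initially, by uniqueness for the continuity equation.

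\textbf{Proof of \eqref{curl}.} Differentiate \eqref{e1e3} in $x_l$ and apply $\F_{lk}$ in front, using $(\partial_t+\u\cdot\nabla)\F_{lk}=\F_{mk}\partial_m u_l$ and the commutator $[\partial_l,\u\cdot\nabla]\F_{ij}=\partial_l u_m\,\partial_m\F_{ij}$; after the relabelling $l\leftrightarrow m$ these two pieces cancel, yielding
\[
(\partial_t+\u\cdot\nabla)(\F_{lk}\partial_l\F_{ij})=\F_{lk}\F_{mj}\,\partial_l\partial_m u_i+\F_{lk}\partial_m u_i\,\partial_l\F_{mj}.
\]
Subtracting the analogous expression with $j\leftrightarrow k$, the second-derivative contribution drops thanks to the key cancellation $(\F_{lk}\F_{mj}-\F_{lj}\F_{mk})\partial_l\partial_m u_i=0$, which uses the symmetry of $\partial_l\partial_m u_i$ in $(l,m)$. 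What remains is the linear transport equation $(\partial_t+\u\cdot\nabla)C_{ijk}=C_{mjk}\partial_m u_i$, so vanishing initial data propagates along characteristics of $\u$. Spotting this second-derivative symmetry cancellation is the main obstacle; everything else is routine algebra.
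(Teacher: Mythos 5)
Your proof is correct, and all three identities are established by the standard propagation-of-constraints argument: the defect quantity ($\varrho\det\F-1$, $\Dv(\varrho\F^\top)$, and $C_{ijk}$) is shown to satisfy a linear homogeneous transport/continuity-type equation, so vanishing initial data propagates. The paper itself does not supply a proof but instead cites Lemmas 6.1 and 6.2 of Hu--Wang \cite{HW1}, where exactly this kind of computation is carried out, so your route is essentially the cited one; your index manipulations (the relabelling $k\to i$ and the $(i,k)\leftrightarrow(k,i)$ swap for \eqref{Dv1}, the $l\leftrightarrow m$ antisymmetry against $\partial_l\partial_m u_i$ for \eqref{curl}) are the right cancellations and check out.
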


Note that \eqref{curl} can be interpreted in terms of  the perturbation
about the equilibrium as
\begin{equation}\label{curl1}
\partial_{x_k}E_{ij}-\partial_{x_j}E_{ik}=E_{lj}\nabla_l E_{ik}-E_{lk}\nabla_l
E_{ij}.
\end{equation}
Similarly, \eqref{Det1} can be also interpreted in terms of  the perturbation
about the equilibrium as
$$(1+\r)\det(I+E)=1,$$ which implies
\begin{equation}\label{00}
(1+\r)\left(1+\tr E+\f{1}{2}\left[(\tr E)^2-\tr(E^2)\right]+\det
E\right)=1
\end{equation}
since for any $3\times 3$ matrix $E$, we have $$\det(I+E)=1+\tr
E+\f{1}{2}\left[(\tr E)^2-\tr(E^2)\right]+\det E.$$ The identity
\eqref{00} further implies
\begin{equation}\label{Dv}
 \tr E=-\r-\r\tr E+(1+\r)\left[\f{1}{2}\left[\tr(E^2)-(\tr E)^2\right]-\det E\right].
\end{equation}
It is worthy to pointing out that by a similar argument, the
constraint in the two-dimensional case between the perturbations of
density and the deformation gradient has the form of
$$\tr E=-\r-\r\tr E-(1+\r)\det E.$$

Similar to the conservation of mass due to the continuity equation
\eqref{e1e1}, the quantity $$\int_\O \varrho \F_i dx$$ is also
conserved, where $\F_i$ is the i-th column of the matrix $\F$.
\begin{Proposition}\label{p2}
 If $\int_\O \varrho_0 (\F_0)_i dx=0$ and $\Dv(\varrho_0\F^\top_0)=0$, then for all time $t>0$,
\begin{equation}\label{po}
\int_\O \varrho \F_i (x,t) dx=0.
\end{equation}
\end{Proposition}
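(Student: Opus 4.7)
The plan is to show that $\frac{d}{dt}\int_\O \varrho \F_i\,dx = 0$, so that conservation reduces to the initial condition. I will work in components and use both evolution equations \eqref{e1e1} and \eqref{e1e3} along with the conclusions of Proposition \ref{p1}.

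First, I would write $(\varrho \F_i)_k = \varrho \F_{ki}$ and apply the product rule, substituting $\varrho_t = -\partial_l(\varrho u_l)$ from the continuity equation and $\partial_t \F_{ki} = -u_l \partial_l \F_{ki} + \partial_l u_k \, \F_{li}$ from \eqref{e1e3}. This gives
\begin{equation*}
\partial_t(\varrho \F_{ki}) = -\partial_l(\varrho u_l)\F_{ki} - \varrho u_l \partial_l \F_{ki} + \varrho \partial_l u_k \, \F_{li}.
\end{equation*}
The first two terms combine as $-\partial_l(\varrho u_l \F_{ki})$, which is already a pure divergence. For the third term, I would use the identity $\varrho \partial_l u_k \F_{li} = \partial_l(\varrho u_k \F_{li}) - u_k \partial_l(\varrho \F_{li})$. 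Observing that $\partial_l(\varrho \F_{li})$ is exactly the $i$-th component of $\Dv(\varrho \F^\top)$, I can invoke \eqref{Dv1} from Proposition \ref{p1} (which propagates from the hypothesis $\Dv(\varrho_0 \F_0^\top)=0$) to kill this term, leaving
\begin{equation*}
\partial_t(\varrho \F_{ki}) = -\partial_l(\varrho u_l \F_{ki}) + \partial_l(\varrho u_k \F_{li}).
\end{equation*}

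Integrating over $\O$ and using the divergence theorem, both terms reduce to boundary integrals whose integrands contain a factor of $\u$; by the no-slip boundary condition $\u|_{\partial\O}=0$ they vanish, so $\int_\O \varrho \F_{ki}\,dx$ is independent of $t$. Combined with the hypothesis $\int_\O \varrho_0 (\F_0)_i\,dx = 0$, this yields \eqref{po}.

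The only subtle point is recognizing that the propagation of the constraint $\Dv(\varrho \F^\top)=0$ (already established in Proposition \ref{p1}) is exactly what cancels the non-divergence piece of $(\varrho \F_i)_t$; without it, one cannot reorganize the $\varrho \nabla\u\,\F_i$ term into a pure divergence, and the conservation identity would fail. Everything else is routine: the computation is an index manipulation, and the boundary terms vanish trivially from the Dirichlet condition.
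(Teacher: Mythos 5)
Your proof is correct and rests on exactly the same three ingredients as the paper's: the evolution identity for $\varrho\F$ obtained from the continuity and transport equations, the propagated constraint $\Dv(\varrho\F^\top)=0$ used to kill the non-divergence piece, and the no-slip condition used to kill boundary terms. The difference is purely in organization. The paper first derives $\partial_t(\varrho\F)+\u\cdot\nabla(\varrho\F)=\nabla\u\,(\varrho\F)-\varrho\F\Dv\u$, then invokes the vector identity $\mathrm{curl}(A\times B)=A\Dv B-B\Dv A+(B\cdot\nabla)A-(A\cdot\nabla)B$ with $A=\u$, $B=\varrho\F_i$, together with $\Dv(\varrho\F_i)=(\Dv(\varrho\F^\top))_i=0$, to arrive at the clean conservation form $\partial_t(\varrho\F_i)-\nabla\times(\u\times\varrho\F_i)=0$; integrating the curl over $\O$ then gives a boundary integral that vanishes since $\u|_{\partial\O}=0$. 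You instead carry out a direct index computation that exposes the same divergence structure without the vector identity: after using $\partial_l(\varrho\F_{li})=0$, you write $\partial_t(\varrho\F_{ki})=-\partial_l(\varrho u_l\F_{ki})+\partial_l(\varrho u_k\F_{li})$ and apply the divergence theorem, with both boundary terms carrying a factor of $\u$. The two calculations are algebraically equivalent; yours is somewhat more elementary and self-contained, while the paper's version packages the result as a local conservation law in curl form.
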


\begin{proof}
Indeed, from \eqref{e1e1} and \eqref{e1e3}, we deduce that
\begin{equation}\label{a1}
\partial_t(\varrho \F)+\u\cdot\nabla(\varrho \F)=\nabla\u(\varrho\F)-\varrho\F\Dv\u.
\end{equation}

On the other hand, from the vector identity
$$\textrm{curl}(A\times B)=A\Dv B-B\Dv A+(B\cdot\nabla)A-(A\cdot\nabla)B$$
for all vector-valued functions A, B, and $\Dv(\varrho \F^\top)=0$, one
has
\begin{equation*}
 \begin{split}
&\u\cdot\nabla(\varrho\F_i)-\nabla\u \varrho\F_i+\varrho\F_i\Dv\u\\
&\quad=\u\cdot\nabla(\varrho\F_i)-\varrho\F_i\cdot\nabla\u +\varrho\F_i\Dv\u\\
&\quad=-\nabla\times(\u\times \varrho\F_i).
 \end{split}
\end{equation*}
Thus, we can rewrite \eqref{a1} as
$$\partial_t(\varrho \F_i)-\nabla\times(\u\times \varrho\F_i)=0.$$
Integrating this identity over $\O$ gives
\begin{equation}\label{RF}
\f{d}{dt}\int_\O\varrho\F_i dx=0.
\end{equation}
The proof is complete.
\end{proof}

\bigskip

\section{Main Results}

In this paper, the standard notations for Sobolev spaces $W^{s,
q}$ and Besov spaces $B^s_{pq}$ will be used, and the following interpolation spaces will be
needed:
$$X^{2(1-\f{1}{p})}_p=\left(L^q(\O), W^{2,q}(\O)\right)_{1-\f{1}{p},p}=B^{2(1-\f{1}{p})}_{qp},$$
and
$$Y^{1-\f{1}{p}}_p=\left(L^q(\O), W^{1,q}(\O)\right)_{1-\f{1}{p},p}=B^{1-\f{1}{p}}_{qp}.$$
Now we introduce the following functional spaces to which the
solution and initial condition  of the system \eqref{e1} will
belong. Given $1\le p, q<\infty$ and $T>0$, we set
$Q_T=\O\times(0,T)$, and
$$\mathcal{W}^{p,q}(0,T):=\left\{\u\in W^{1,p}(0,T; (L^q(\O))^3)\cap L^p(0,T;(W^{2,q}(\O))^3)\right\}$$
with the norm
$$\|\u\|_{\mathcal{W}^{p,q}(0,T)}:=\|\u_t\|_{L^{p}(0,T; L^q(\O))}+\|\u\|_{L^p(0,T; W^{2,q}(\O))},$$
as well as
$$V_0^{p,q}:=\left(X^{2(1-\f{1}{p})}_p\cap Y^{1-\f{1}{p}}_p\right)^3\times \left(W^{1,q}(\O)\right)^{10}$$
with the norm
$$\|(f,g)\|_{V_0^{p,q}}:=\|f\|_{X^{2(1-\f{1}{p})}_p}+\|f\|_{Y^{1-\f{1}{p}}_p}+\|g\|_{W^{1,q}(\O)}.$$
For simplicity of notations, we drop the superscripts $p,q$ in $\mathcal{W}^{p,q}$ and
$V_0^{p,q}$, that is, we denote
$$\mathcal{W}:=\mathcal{W}^{p,q}, \qquad V_0:=V_0^{p,q}.$$

For large initial data, the following local in time well-posedness can be obtained as in \cite{HW}.

\begin{Theorem} \label{T20}
Assume that $\O$ is a bounded domain in $\R^3$ with $C^{2+\beta}$ ($\beta>0$) boundary
and $(\u_0, \r_0, E_0)$  $\in  V_0$ with
$p\in [2,\infty), q\in(3,\infty)$. There exists a positive
constant $T_0$ such that the initial-boundary value problem \eqref{e1}-\eqref{IC} has a unique strong solution on
$\O\times (0,T_0)$, satisfying
$$(\u, \r, E)\in\mathcal{W}(0,T_0)\times\left(W^{1,p}(0,T_0; L^q(\O))\cap L^p(0,T_0; W^{1,q}(\O))\right)^{10}.$$
\end{Theorem}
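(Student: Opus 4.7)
The plan is to construct the local strong solution as the fixed point of an iteration map in
$$\mathcal{X}_{T_0}:=\mathcal{W}(0,T_0)\times \left(W^{1,p}(0,T_0;L^q(\O))\cap L^p(0,T_0;W^{1,q}(\O))\right)^{10},$$
exploiting the fact that $p\ge 2$ and $q>3$: Morrey's inequality makes $W^{1,q}(\O)$ a Banach algebra embedded in $C(\bar\O)$, and $\u\in\mathcal{W}(0,T_0)$ supplies $\na\u\in L^1(0,T_0;L^\infty(\O))$, exactly the regularity needed to handle the transport equations for $\r$ and $E$.

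Given an approximate triple $(\tilde\r,\tilde\u,\tilde E)$ in a closed ball $B_M\subset\mathcal{X}_{T_0}$ centered at the (time-constant extension of the) initial data, I would define the next iterate by solving three decoupled linear problems. The continuity equation in non-conservative form $\r_t+\tilde\u\cdot\na\r+\r\Dv\tilde\u=-\Dv\tilde\u$ and the deformation-gradient equation $E_t+\tilde\u\cdot\na E=\na\tilde\u\,E+\na\tilde\u$ are first-order linear transport equations whose unique solutions in the required regularity class are produced by the method of characteristics, with estimates that are exponential in $\|\tilde\u\|_{\mathcal{W}(0,T_0)}$. The linearized momentum equation
\begin{equation*}
(1+\tilde\r)\u_t-\mu\D\u-(\mu+\lambda)\na\Dv\u=\mathcal{F}(\tilde\r,\tilde\u,\tilde E),\qquad\u|_{\del\O}=0,\quad\u|_{t=0}=\u_0,
\end{equation*}
is a parabolic Stokes-type system with a variable leading coefficient that remains uniformly positive on a short interval by continuity, since $1+\r_0>0$ and $\tilde\r\in C([0,T_0];W^{1,q})\hookrightarrow C([0,T_0]\times\bar\O)$ starts from $\r_0$. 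The maximal $L^p(L^q)$ regularity for such non-stationary Stokes systems on a $C^{2+\beta}$ domain then furnishes $\u\in\mathcal{W}(0,T_0)$ with an estimate of the form $\|\u\|_{\mathcal{W}(0,T_0)}\le C\bigl(\|\u_0\|_{X^{2(1-1/p)}_p}+\|\mathcal{F}\|_{L^p(0,T_0;L^q)}\bigr)$.

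To close the scheme, each nonlinear source entering $\mathcal{F}$ and the right-hand sides of the transport equations---$\Dv((1+\tilde\r)\tilde\u\otimes\tilde\u)$, $\Dv((1+\tilde\r)(I+\tilde E)(I+\tilde E)^\top)$, $\na P(1+\tilde\r)$, $\na\tilde\u\,\tilde E$, and $\tilde\u\cdot\na\tilde E$---must be bounded in $L^p(0,T_0;L^q(\O))$ by a polynomial in $M$ carrying a positive power of $T_0$. The $W^{1,q}$-algebra property together with H\"older's inequality in time delivers exactly such estimates, so both $B_M$-invariance and contraction in $\mathcal{X}_{T_0}$ follow by choosing $T_0$ small. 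Uniqueness is then a direct consequence of the same contraction estimate applied to the difference of two solutions with identical data, combined with Gr\"onwall's inequality. The main obstacle is that the low regularity of $(\r,E)$, which carry only one $L^q$-derivative and satisfy purely transport equations with no smoothing, forces the iteration norm for these components to coincide with both input and output of the map; the whole argument therefore hinges on extracting the small time factor that defeats the polynomial in $M$, a gain made possible precisely by the maximal regularity for $\u$ and the embedding $W^{1,q}\hookrightarrow L^\infty$ valid for $q>3$.
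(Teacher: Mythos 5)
You should first note that the paper itself offers \emph{no} proof of Theorem \ref{T20}: it simply states that ``the argument for proving Theorem \ref{T20} is similar to that in \cite{HW}, thus we omit the proof here.'' So the comparison is really against the standard local-existence scheme of \cite{HW}, and your plan --- freeze the velocity, solve two linear transport problems for $\r$ and $E$ by characteristics, solve the frozen-coefficient parabolic momentum system by maximal $L^p$--$L^q$ regularity, then close by a Banach fixed point in a ball of $\mathcal{X}_{T_0}$ with a small time factor supplied by H\"older in time and the embedding $W^{1,q}\hookrightarrow L^\infty$ for $q>3$ --- is indeed the standard route and consistent in spirit with the reference. Two small imprecisions: the linearized momentum system is not a Stokes system (there is no divergence constraint; it is the Lam\'e operator), and for \emph{large} data you cannot simply move $\tilde\r\,\u_t$ to the right-hand side and absorb it, so you genuinely need either variable-coefficient maximal regularity with a continuous leading coefficient bounded away from zero, as you indicate, or a further reduction.

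There is, however, one genuine gap: your claim that ``both $B_M$-invariance and contraction in $\mathcal{X}_{T_0}$ follow by choosing $T_0$ small'' is false as stated, and the small time factor does not repair it. The transport equations for $\r$ and $E$ have no smoothing, and when you take the difference of two iterates, the source term of the difference equation contains $(\tilde\u_1-\tilde\u_2)\cdot\nabla\r_2$ (and the analogue for $E$); estimating this difference in the $W^{1,q}$ norm forces you to put a derivative on $\nabla\r_2$, i.e.\ you need $\r_2\in W^{2,q}$, which is one derivative more than the space $\mathcal{X}_{T_0}$ provides. This is a loss of derivative in the contraction estimate, not merely a question of extracting a power of $T_0$, so contraction in the full norm of $\mathcal{X}_{T_0}$ cannot hold. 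The standard remedy --- and almost certainly what \cite{HW} uses --- is a two-norm (Picard--Weissler) argument: establish $B_M$-invariance in the strong topology of $\mathcal{X}_{T_0}$, but prove contraction only in a weaker topology (e.g.\ $L^\infty(0,T_0;L^q)$ for $\r,E$ and $L^p(0,T_0;W^{1,q})\cap W^{1,p}(0,T_0;(W^{1,q})^*)$ or similar for $\u$), observe that the bounded closed ball $B_M$ is closed in the weaker topology, and then invoke Banach's theorem there. The limit then inherits the $\mathcal{X}_{T_0}$ bound by lower semicontinuity. Your Gr\"onwall uniqueness argument suffers from the same derivative loss and likewise must be carried out in the weaker norm. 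Without this adjustment the fixed-point argument does not close.
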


The argument for proving Theorem \ref{T20} is similar to that in \cite{HW}, thus we  omit the proof here.

\medskip

\begin{Remark}
An interesting case is the case $q\le p$. Indeed, by the real
interpolation method, we have
$$W^{2(1-\f{1}{p}),q}\subset B^{2(1-\f{1}{p})}_{qp}=X^{2(1-\f{1}{p})}_p,$$
and
$$W^{1-\f{1}{p},q}\subset B^{1-\f{1}{p}}_{qp}=Z^{1-\f{1}{p}}_p.$$
Then, if we replace the functional space $V_0^{p,q}$ in Theorem
\ref{T20} by
$$\mathcal{V}_0^{p,q}:=\left((W^{2(1-\f{1}{p}),q}(\O))^3\cap (W^{1-\f{1}{p}, q}(\O))^3\right)\times
(W^{1,q}(\O))^{10},$$ Theorem \ref{T20} is still valid.
\end{Remark}

Now our main result can be stated as follows.

\begin{Theorem}\label{T1}
Assume that $\O$ is a bounded domain in $\R^3$ with  $C^{2+\beta}$ ($\beta>0$) boundary and $(\u_0, \r_0, E_0)$  $\in  V_0$ with $p\in [2,\infty), q\in(3,\infty)$. There exists a  positive number $R<1$ such that if the initial data satisfies \eqref{Dv1}, \eqref{curl}, \eqref{po}, and
$$\|(\u_0, \r_0, E_0)\|_{V_0}\le R^2,$$
then the initial-boundary value problem \eqref{e1}-\eqref{IC} has a unique global strong solution 
$$(\u, \r, E)\in\mathcal{W}(0,\infty)\times\left(W^{1,p}(0,\infty; L^q(\O))\cap L^p(0,\infty; W^{1,q}(\O))\right)^{10},$$
satisfying
\begin{equation}
 \begin{cases}
  \|\u\|_{\mathcal{W}(0,\infty)}<R;\\
\|\r\|_{L^\infty(0,\infty; W^{1,q}(\O))}<R;\\
\|E\|_{L^\infty(0,\infty; W^{1,q}(\O))}<R.
 \end{cases}
\end{equation}
\end{Theorem}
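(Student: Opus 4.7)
The plan is to run a continuation argument on top of the local well-posedness given by Theorem \ref{T20}. Set
\[
T^{*}=\sup\Bigl\{T>0 \;:\; \|\u\|_{\mathcal{W}(0,T)}+\|\r\|_{L^{\infty}(0,T;W^{1,q}(\O))}+\|E\|_{L^{\infty}(0,T;W^{1,q}(\O))}\le R\Bigr\}.
\]
Theorem \ref{T20} together with the smallness of the initial data forces $T^{*}>0$. The strategy is to prove that, for $R$ small enough, on $[0,T^{*})$ the three norms above are in fact bounded by $CR^{2}$; since $CR^{2}<R$ this contradicts the maximality of $T^{*}$ unless $T^{*}=\infty$. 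Uniqueness comes from Theorem \ref{T20}, and the algebraic/compatibility conditions \eqref{Dv1}, \eqref{curl}, \eqref{po} propagate in time by Propositions \ref{p1} and \ref{p2}, hence may be used throughout the existence interval.

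The first main ingredient (Section 4 of the paper) is a uniform estimate for the linearized momentum equation. Rewriting \eqref{e12} as
\[
\u_{t}-\mu\D\u-(\mu+\lambda)\nabla\Dv\u=\mathcal{N}(\r,\u,E),\qquad \u|_{\del\O}=0,
\]
the $L^{p}$--$L^{q}$ maximal regularity for the Stokes system with Dirichlet data yields
\[
\|\u\|_{\mathcal{W}(0,T)}\le C\|(\u_{0},\r_{0},E_{0})\|_{V_{0}}+C\|\mathcal{N}\|_{L^{p}(0,T;L^{q}(\O))}.
\]
The key point is that $\mathcal{N}$ is genuinely quadratic in the smallness parameter. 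The convective and pressure terms already are, up to the linear pressure $P'(1)\nabla\r$; this last term is paired against the linear part of the elastic stress $\Dv\bigl((I+E)(I+E)^{\top}\bigr)$, and the identities \eqref{Dv} (which rewrites $\tr E=-\r+\textrm{quadratic}$) together with the curl constraint \eqref{curl1} show that the combination $\nabla\r+\Dv E$ is itself higher order, as stated in the introduction. Thus $\|\mathcal{N}\|_{L^{p}(L^{q})}$ is $O(R^{2})$.

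The second ingredient (Section 5) is to bound $\r$ and $E$ directly from the transport equations \eqref{e11} and \eqref{e13}. Differentiating once in space and testing against $|\nabla\r|^{q-2}\nabla\r$ and $|\nabla E|^{q-2}\nabla E$, while controlling $\|\na\u\|_{L^{\infty}}$ through the embedding $W^{2,q}\hookrightarrow W^{1,\infty}$ for $q>3$, produces an inequality of the shape
\[
\|\nabla\r(t)\|_{L^{q}}+\|\nabla E(t)\|_{L^{q}}\le C\|(\nabla\r_{0},\nabla E_{0})\|_{L^{q}}+C\int_{0}^{t}\|\u\|_{W^{2,q}}\bigl(\|\nabla\r\|_{L^{q}}+\|\nabla E\|_{L^{q}}\bigr)d\tau.
\]
Promoting these gradient bounds to full $W^{1,q}$ bounds requires zero-mean information, which the conserved quantity \eqref{po} of Proposition \ref{p2} supplies for $E$ via the Poincar\'e inequality, after which \eqref{Dv} supplies it for $\r$.

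The main obstacle, and the technical heart of Section 6, is to close the bootstrap in a \emph{time-independent} way: the raw transport estimate above only gives exponential growth $\exp\bigl(C\int_{0}^{t}\|\u\|_{W^{2,q}}d\tau\bigr)$, which cannot be absorbed without time-integrability of $\|\u\|_{W^{2,q}}$, and such integrability needs a genuine dissipation for $(\r,E)$, which \eqref{e11}--\eqref{e13} lack. Following the strategy flagged in the introduction, I would introduce the auxiliary vector field obtained by convolving $\Dv E$ with the fundamental solution of $-\D$ and adding it to $\u$. This combination satisfies a parabolic-type equation whose viscous term, combined with the higher-order nature of $\nabla\r+\Dv E$ and of $\Dv(E-E^{\top})$, produces an effective dissipation for $\nabla\r$ and $\nabla E$. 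Coupling this with the maximal-regularity bound on $\u$ and the transport bounds on $(\r,E)$, and choosing $R$ small, one obtains $\|\u\|_{\mathcal{W}(0,T^{*})}+\|\r\|_{L^{\infty}(W^{1,q})}+\|E\|_{L^{\infty}(W^{1,q})}\le CR^{2}<R$, which forces $T^{*}=\infty$ and yields the global strong solution with the strict bounds claimed in the statement.
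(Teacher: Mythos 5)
Your proposal captures the overall architecture of the paper's argument (continuation principle via $T_{\max}$, maximal regularity for the momentum equation, dissipation estimates for $(\nabla\r,\nabla E)$ in $L^p(L^q)$, and Poincar\'e via the conserved quantities to upgrade gradient bounds to full $W^{1,q}$ bounds). However, the two central technical devices are either missing or misstated, and without them the bootstrap does not close.

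First, your claim that the full nonlinearity $\mathcal{N}$ on the right of the rewritten momentum equation is $O(R^2)$, justified by the assertion that ``$\nabla\r+\Dv E$ is higher order,'' is circular: that combination is only shown to be quadratic \emph{a posteriori}, through \eqref{q1}--\eqref{q3}, which themselves rely on the $L^p(L^q)$ dissipation bounds. Moreover your linearization has no pressure term on the left, so you are invoking pure parabolic maximal regularity, which gives no information on $\nabla\r$ or $\Dv E$. The paper instead applies the Riesz-type operator $\mathcal{R}_{ij}=\D^{-1}\partial_i\partial_j$ to \eqref{e1e2}, which, combined with the identities \eqref{curl1} and \eqref{Dv}, produces an effective linear pressure $\alpha\partial_{x_j}\r$ on the left (see \eqref{x2}) so that the \emph{Stokes} part of Lemma \ref{L1} controls $\|\nabla\r\|_{L^p(L^q)}$ directly. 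This $\mathcal{R}_{ij}$-projection step is the mechanism that makes the linear parts cancel, and it is absent from your sketch. Similarly, the auxiliary field $Z=\u-\mu^{-1}\mathcal{L}(\Dv E)$ (with $\mathcal{L}$ the Lam\'e solver \eqref{512}, not the Laplacian) is merely announced in your proposal, whereas the actual dissipation of $\Dv E$ in $L^p(L^q)$, and its upgrade from $\|\nabla E\|\le CR$ to $\|\nabla E\|\le CR^2$ through \eqref{q1}--\eqref{q3}, is the content of Lemma \ref{E1} and Corollary \ref{c1} and cannot be elided.

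Second, your treatment of the transport estimate for $(\r,E)$ via exponential Gronwall, $\exp\bigl(C\int_0^t\|\u\|_{W^{2,q}}\bigr)$, would require $L^1$-in-time control of $\|\u\|_{W^{2,q}}$, which is \emph{not} available from the bootstrap assumption $\u\in L^p(0,T;W^{2,q})$ uniformly in $T$. The paper avoids this by working with $\sigma=\nabla\ln\varrho$, multiplying the differential inequality for $\|\sigma\|_{L^q}^q$ by $\|\sigma\|_{L^q}^{p-q}$ and integrating, so that a H\"older inequality in time with exponents $(p',p)$ applies and the bound closes using $\|\sigma\|_{L^p(L^q)}\le CR^2$ (Corollary \ref{c1}) and $\|\u\|_{L^p(W^{2,q})}\le CR^2$ (Lemma \ref{l51}), producing $\|\sigma\|_{L^\infty(L^q)}\le R^{3/2}$ via a continuity argument. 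Finally, the zero-mean property of $\r$ used in the Poincar\'e step comes from the conservation of $\int_\O\r\,dx$ (from the continuity equation and \eqref{a2}), not from \eqref{Dv}; the conserved quantity \eqref{po} is what supplies it for $(1+\r)E$.
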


\medskip

The rest of this paper is devoted to the proof of Theorem \ref{T1}. 
For  simplicity of presentation, we will assume that $$|\O|=1,$$
\begin{equation}\label{a2}
\int_\O\r_0dx=0,
\end{equation}
and
\begin{equation}\label{a3}
\int_\O (1+\r_0)(I+E_0)_{ij}dx=\dl_{ij}=\begin{cases}
                                         0,\textrm{ if}\quad i\neq j;\\
1,\textrm{ if}\quad i=j.
                                        \end{cases}
\end{equation}
Note that from the continuity equation and \eqref{RF} in
Section 2, the identities \eqref{a2} and \eqref{a3}  hold also for
all positive $t> 0$.

\bigskip

\section{Estimates for the Linearized Momentum Equation} \label{Linearization}

In this section, we consider the following linearized equation:
\begin{equation}\label{l1}
 \begin{cases}
  \partial_t f-\mu\D f-(\mu+\lambda)\nabla\Dv f+\nabla h=g\\
f(0)=f_0,\quad f|_{\partial\O}=0,
 \end{cases}
\end{equation}
where $f\in \mathcal{W}(0,T)$, $g\in \left(L^p(0,T;
L^q(\O))\right)^3$, $h\in L^p(0,T; L^q(\O))$, and $f_0\in
V_0^{p,q}(\O)$. For this equation, we have

\begin{Lemma}\label{L1}
Assume that $\O$ is a bounded domain in $\R^3$ with $C^{2+\beta}$ ($\beta>0$) boundary. Then there is a positive constant $C$,
independent of $T$, such that
\begin{equation}\label{l2}
 \|f\|_{\mathcal{W}(0,T)}+\|\nabla h\|_{L^p(0,T; L^q(\O))}\le C\left(\|g\|_{L^p(0,T; L^q(\O))}+\|f_0\|_{V_0^{p,q}(\O)}\right).
\end{equation}
\end{Lemma}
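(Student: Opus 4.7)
The plan is to read the estimate as the maximal $L^p$-$L^q$ regularity statement for the linearized compressible momentum equation, and to recover the pressure gradient $\na h$ algebraically from \eqref{l1}. The basic analytic input I will invoke is the classical $L^p$-regularity theory for the Lam\'e operator $\mathcal{L} = -\mu \D - (\mu+\l)\na \Dv$ with zero Dirichlet boundary conditions on a $C^{2+\beta}$ bounded domain: this operator generates an analytic semigroup on $L^q(\O)^3$ admitting maximal $L^p$-regularity, either via the $\mathcal{R}$-boundedness of the resolvent and the Kunstmann--Weis theorem, or via Solonnikov's classical resolvent estimates for the generalized Stokes system.

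The first step treats $\na h$ as a source term and applies the Lam\'e maximal regularity to \eqref{l1}, producing
\begin{equation*}
\|f\|_{\mathcal{W}(0,T)} \le C\bigl(\|g\|_{L^p(0,T;L^q(\O))} + \|\na h\|_{L^p(0,T;L^q(\O))} + \|f_0\|_{V_0}\bigr).
\end{equation*}
Simultaneously, reading \eqref{l1} as an identity for the pressure gradient gives $\na h = g - \partial_t f + \mu \D f + (\mu+\l)\na \Dv f$, and hence $\|\na h\|_{L^p(0,T;L^q(\O))} \le \|g\|_{L^p(0,T;L^q(\O))} + C\|f\|_{\mathcal{W}(0,T)}$. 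The apparent circularity between these two bounds is resolved by a Helmholtz decomposition: applying the Leray projection $\mathbb{P}$ to \eqref{l1} kills both $\na \Dv f$ and $\na h$, reducing it to a purely parabolic equation $\partial_t \mathbb{P} f - \mu \D \mathbb{P} f = \mathbb{P} g$ whose maximal regularity furnishes $\|\mathbb{P} f\|_{\mathcal{W}(0,T)} \le C(\|g\|_{L^p(L^q)} + \|f_0\|_{V_0})$ with no appearance of $\na h$. The complementary part $(I-\mathbb{P})f$ together with $\na h$ is then determined by an elliptic-type relation on the gradient sector from which both pieces are recovered simultaneously.

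The principal obstacle, and the reason the lemma is nontrivial, is that the constant $C$ must be independent of $T$. Standard parabolic maximal regularity on a finite interval yields $C=C(T)$ that can in principle degenerate as $T\to\infty$. On a bounded domain with Dirichlet boundary conditions the Lam\'e operator has a strictly positive first eigenvalue, so the associated semigroup $e^{-t\mathcal{L}}$ decays exponentially. I would exploit this by writing the mild formulation
\begin{equation*}
f(t) = e^{-t\mathcal{L}}f_0 + \int_0^t e^{-(t-s)\mathcal{L}}\bigl(g(s)-\na h(s)\bigr)\,ds,
\end{equation*}
extending the forcing by zero outside $(0,T)$ to reduce the analysis to the half-line $(0,\infty)$, and invoking maximal $L^p$-regularity of the convolution operator on $(0,\infty)$, whose operator norm is finite precisely because of the spectral gap. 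The initial-data contribution is controlled by the continuous embedding of $V_0$ into the trace space of $\mathcal{W}$ at $t=0$, which is built into the interpolation definitions of $X^{2(1-1/p)}_p$ and $Y^{1-1/p}_p$; one should then verify that the half-line convolution bound and the semigroup decay both depend only on $\mu$, $\l$, $p$, $q$, and $\O$, but not on the truncation time $T$.
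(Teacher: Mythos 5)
Your high-level instinct is right --- decouple the solenoidal dynamics from the gradient dynamics, then control each piece by maximal regularity with $T$-independent constants --- but the concrete mechanism you propose fails on a bounded domain, and this is exactly where the difficulty lies.

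The step that breaks is the passage $\partial_t \mathbb{P} f - \mu \D \mathbb{P} f = \mathbb{P} g$. On a bounded domain with no-slip boundary conditions the Leray projection does \emph{not} commute with the Laplacian: $\mathbb{P}\D f \neq \D\mathbb{P}f$, which is precisely why the Stokes operator $A = -\mathbb{P}\D$ is a genuinely different object from $-\D$ restricted to solenoidal fields (it has nonlocal pressure corrections baked in). Moreover $f|_{\partial\O}=0$ does not imply $\mathbb{P}f|_{\partial\O}=0$; only the normal trace of $\mathbb{P}f$ vanishes, so $\mathbb{P}f$ does not satisfy the Dirichlet boundary condition needed for the heat estimate you want to apply. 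So the ``purely parabolic equation'' you write down is not the equation $\mathbb{P}f$ satisfies, and the claimed bound on $\|\mathbb{P}f\|_{\mathcal{W}(0,T)}$ has no justification. The subsequent recovery of $(I-\mathbb{P})f$ and $\nabla h$ from an ``elliptic-type relation'' is also left at the level of a gesture: the Stokes pressure on a bounded domain does not separate off from the velocity this cleanly, and in fact the coupling between them \emph{is} the content of the $L^p$-$L^q$ Stokes estimate you would need to invoke.

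The paper sidesteps all of this by decomposing the \emph{solution} rather than the \emph{equation}: it sets $f=f_1+f_2$ where $f_1$ solves the full nonstationary Stokes system (divergence constraint, pressure $h$, zero initial data, no-slip) with forcing $g$, and $f_2$ solves the homogeneous Lam\'e parabolic problem $\partial_t f_2-\mu\D f_2-(\mu+\l)\na\Dv f_2=0$ with data $f_0$. The sum solves \eqref{l1} because $\Dv f_1\equiv 0$ makes the term $(\mu+\l)\na\Dv f_1$ vanish identically, so the two solution operators can be superposed despite coming from different systems. The Stokes block is handled by the classical $L^p$-$L^q$ maximal regularity of Danchin (Proposition \ref{SE}, already stated on $\R^+$, hence $T$-uniform, which addresses your spectral-gap concern directly), and the Lam\'e block by Proposition \ref{T3}. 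Both $\|f_1\|_{\mathcal{W}(0,T)}+\|\na h\|_{L^pL^q}$ and $\|f_2\|_{\mathcal{W}(0,T)}$ come with $T$-independent constants, and adding them gives the lemma. Your mild-formulation paragraph at the end is a reasonable way to argue $T$-uniformity in the parabolic block, but it cannot rescue the Leray projection step, which needs to be replaced by the Stokes/Lam\'e decomposition of the solution itself.
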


The strategy of proving Lemma \ref{L1} is to decompose the
function $f$ into two parts: one is a solution to Stokes equations
and the other is a solution to a parabolic equation. For this
purpose, let us first recall the maximal regularity theorem for
parabolic equations (cf. \cite{AI}).

\begin{Proposition}\label{T3}
Assume that $\O$ is a bounded domain in $\R^3$ with $C^{2+\beta}$ ($\beta>0$) boundary.
Given $1<p<\infty$, $\omega_0\in V_0^{p,q}$ and $\varphi\in L^p(0,T;
L^q(\O))$, the Cauchy problem
\begin{equation*}
 \begin{cases}
\f{\partial\omega}{\partial t}-\mu\D\omega-(\mu+\lambda)\nabla\Dv\omega=\varphi,\\
\omega(0)=\omega_0,
\end{cases}
\end{equation*}
has a unique solution $\omega\in \mathcal{W}(0,T)$, and
$$\|\omega\|_{\mathcal{W}(0,T)}\le C_1\left(\|\varphi\|_{L^p(0,T; L^q(\O))}+\|\omega_0\|_{V_0^{p,q}}\right),$$
where $C_1$ is independent of $\omega_0$ and $\varphi$. In addition,
there exists a positive constant $c_0$ independent of $\varphi$ such
that
$$\|\omega\|_{\mathcal{W}(0,T)}\ge c_0\sup_{t\in(0,T)}\|\omega(t)\|_{V_0^{p,q}}.$$
\end{Proposition}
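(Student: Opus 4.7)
The plan is to recognize Proposition \ref{T3} as a standard maximal $L^p$-$L^q$ regularity statement for the Lam\'e system with the (implicit) zero Dirichlet boundary condition on the bounded $C^{2+\beta}$ domain $\Omega$, and to assemble the proof from three ingredients: analyticity of the underlying semigroup, maximal parabolic regularity for its generator, and the standard trace embedding.

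First I would verify that the operator $\mathcal{A}\omega := -\mu\Delta\omega - (\mu+\lambda)\nabla\Dv\omega$ with $D(\mathcal{A}) = (W^{2,q}(\O)\cap W^{1,q}_0(\O))^3$ is strongly elliptic: its principal symbol
$$A(\xi) = \mu|\xi|^2 I + (\mu+\lambda)\,\xi\otimes\xi$$
has eigenvalues $\mu|\xi|^2$ (double) and $(2\mu+\lambda)|\xi|^2$, both strictly positive under the hypotheses on $\mu,\lambda$. The Lopatinskii--Shapiro complementing condition for Dirichlet data is then immediate. Standard Agmon--Douglis--Nirenberg theory yields elliptic regularity $\|\omega\|_{W^{2,q}} \le C\|\mathcal{A}\omega\|_{L^q}$ (using Poincar\'e), and the Agmon trick -- freezing coefficients, using the explicit resolvent on $\R^3$ and the half-space model, then patching via a partition of unity -- gives resolvent estimates $\|(\lambda+\mathcal{A})^{-1}\|_{\mathcal{L}(L^q)} \le C/|\lambda|$ in a sector $|\arg\lambda| \le \pi-\theta$ with $\theta < \pi/2$, so that $-\mathcal{A}$ generates a bounded analytic semigroup on $L^q(\O)^3$.

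The central step is the maximal regularity estimate: for $\omega_0 = 0$, the mild solution of $\omega_t + \mathcal{A}\omega = \varphi$ must satisfy $\|\omega_t\|_{L^p(L^q)} + \|\mathcal{A}\omega\|_{L^p(L^q)} \le C\|\varphi\|_{L^p(L^q)}$ with $C$ independent of $T$. I would deduce this from the $R$-boundedness of the resolvent family $\{\lambda(\lambda+\mathcal{A})^{-1} : |\arg\lambda| \le \pi-\theta\}$ on $L^q(\O)^3$ via Weis's operator-valued Mihlin theorem (equivalently, from bounded imaginary powers $\|\mathcal{A}^{is}\|_{\mathcal{L}(L^q)} \le Ce^{\varepsilon|s|}$ and the Dore--Venni theorem). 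This $R$-boundedness verification is the main technical obstacle; on $\R^3$ it is immediate from Mihlin's theorem applied to the explicit Fourier symbol, and the extension to the bounded domain is standard via localization and lower-order perturbation, as developed in the Denk--Hieber--Pr\"uss framework. Independence of the constant from $T$ follows from strict positivity $\mathcal{A} \ge cI$ on $L^q$ (Poincar\'e), which allows the multiplier estimates to be carried out on the whole half-line $(0,\infty)$ after extending $\varphi$ by zero.

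To incorporate the initial data I would split $\omega = e^{-t\mathcal{A}}\omega_0 + \omega^{(2)}$, with $\omega^{(2)}$ solving the zero initial data problem controlled by the previous step. The classical trace characterization
$$W^{1,p}(0,T;L^q) \cap L^p(0,T;D(\mathcal{A})) \hookrightarrow C\bigl([0,T]; (L^q, D(\mathcal{A}))_{1-1/p,p}\bigr)$$
then shows that $e^{-t\mathcal{A}}\omega_0 \in \mathcal{W}(0,T)$ precisely when $\omega_0$ lies in the real interpolation space $(L^q, D(\mathcal{A}))_{1-1/p,p}$, which a standard computation identifies with $X^{2(1-1/p)}_p \cap Y^{1-1/p}_p = V_0^{p,q}$ (the $Y$-factor encodes Dirichlet compatibility whenever $2(1-1/p) > 1/q$). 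Applying the same embedding to the full solution yields the reverse estimate $c_0\sup_{t\in(0,T)}\|\omega(t)\|_{V_0^{p,q}} \le \|\omega\|_{\mathcal{W}(0,T)}$ asserted at the end of the proposition.
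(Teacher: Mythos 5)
The paper does not actually prove Proposition \ref{T3}; it cites Novotn\'y--Stra\v{s}kraba \cite{AI} and treats the statement as a known maximal $L^p$-$L^q$ regularity result, so there is no in-paper argument to compare yours against. Your outline is nonetheless a correct assembly of the standard machinery: positivity of the symbol $\mu|\xi|^2I+(\mu+\lambda)\xi\otimes\xi$ (note that $\mu>0$ together with $2\mu+3\lambda>0$ does imply $2\mu+\lambda>0$, so the spectral claim holds), resolvent estimates by localization giving an analytic semigroup on $L^q(\O)^3$, $\mathcal{R}$-boundedness and the Weis multiplier theorem (equivalently Dore--Venni) in the Denk--Hieber--Pr\"uss framework for the core maximal-regularity inequality, and the Lions--Peetre trace embedding to identify the initial data class and to pass from the $\mathcal{W}(0,T)$ bound to the $C([0,T];V_0^{p,q})$ bound. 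You also correctly flag that strict positivity of $\mathcal{A}$ (Poincar\'e, Dirichlet boundary) is what yields a $T$-independent constant in the forward estimate, which is essential for the way Lemma \ref{L1} is used later.

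One caveat: your final sentence suggests the lower bound $c_0\sup_{t\in(0,T)}\|\omega(t)\|_{V_0^{p,q}}\le\|\omega\|_{\mathcal{W}(0,T)}$ falls out of the same trace embedding as a clean two-sided estimate, but it cannot hold with $c_0$ uniform in $T$. Indeed, take $\varphi\equiv\mathcal{A}\omega_0$ with $\omega_0\in D(\mathcal{A})$, $\mathcal{A}\omega_0\neq0$; then $\omega(t)\equiv\omega_0$ solves the Cauchy problem, $\omega_t\equiv0$, so $\|\omega\|_{\mathcal{W}(0,T)}=T^{1/p}\|\omega_0\|_{W^{2,q}}\to0$ as $T\to0$ while $\sup_t\|\omega(t)\|_{V_0^{p,q}}=\|\omega_0\|_{V_0^{p,q}}$ is fixed. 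The trace theorem on a finite interval produces a constant that degenerates as $T\to0$ (or, in the $T$-uniform formulation, carries an extra $\|\omega(0)\|_{V_0^{p,q}}$ term on the right). This is consistent with the proposition, which only asserts independence of $c_0$ from $\varphi$, not from $T$, and the paper uses that lower bound only qualitatively (to conclude $\u(T_{\mathrm{max}})\in V_0^{p,q}$ in the continuation argument). You should make the $T$-dependence of $c_0$ explicit rather than presenting the two estimates as if they were symmetric.
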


The classical $L^p-L^q$ estimates for Stokes equations can be stated as follows (see
Theorem 3.2 in \cite{DR}).

\begin{Proposition}\label{SE}
Assume that $\O$ is a bounded domain in $\R^3$ with $C^{2+\beta}$ ($\beta>0$) boundary, and
$\psi\in L^s(\R^+, L^q(\O))$ for $1<q,s<\infty$. Then the initial-boundary value problem of the Stokes equations
\begin{equation*}
 \begin{cases}
  \partial_t\u-\mu\D\u+\nabla\pi=\psi,\quad \int_{\O}\pi dx=0,\\
\Dv\u=0,\\
\u|_{\partial\O}=\u|_{t=0}=0,
 \end{cases}
\end{equation*}
has a unique solution $(\u,\pi)$ satisfying the following inequality for all $T>0$:
\begin{equation*}
 \begin{split}
  \|(\D\u,\nabla\pi,\u_t)\|_{L^s(0,T; L^q(\O))}\le C\|\psi\|_{L^s(0,T; L^q(\O))}
 \end{split}
\end{equation*}
with $C=C(q,s,\O,\mu)$.
\end{Proposition}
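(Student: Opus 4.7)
The strategy is to reformulate the Stokes system as an abstract evolution equation on the solenoidal subspace of $L^q$ and invoke maximal regularity for a sectorial operator with bounded imaginary powers. First I would introduce the Helmholtz projection $P_q : L^q(\O)^3 \to L^q_\sigma(\O)$, where $L^q_\sigma(\O)$ is the closure in $L^q$ of smooth compactly supported solenoidal fields; on a $C^{2+\beta}$ bounded domain $P_q$ is bounded and induces the decomposition $L^q = L^q_\sigma \oplus G^q$ with $G^q = \{\nabla\varphi : \varphi \in W^{1,q}(\O)\}$. Applying $P_q$ to the equation eliminates the pressure and yields
$$\partial_t\u + A\u = P_q\psi, \qquad \u(0)=0,$$
where $A = -\mu P_q \D$ is the Stokes operator on $L^q_\sigma$ with domain $D(A) = W^{2,q}(\O)^3 \cap W^{1,q}_0(\O)^3 \cap L^q_\sigma(\O)$.

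Next I would invoke the classical resolvent estimates of Solonnikov--Giga: for every $\t \in (0,\pi)$ there is $C$ with $\|(A+\l)^{-1}\|_{L^q_\sigma\to L^q_\sigma} \le C/|\l|$ for $\l$ in the sector of opening $\t$, so $A$ generates a bounded analytic semigroup. A further result of Giga--Sohr shows that $A$ has bounded imaginary powers $\|A^{is}\|_{L^q_\sigma\to L^q_\sigma} \le C_\varepsilon e^{\varepsilon|s|}$ for every $\varepsilon>0$; equivalently $A$ admits a bounded $H^\infty$-calculus on the UMD space $L^q_\sigma$. The Dore--Venni theorem (or Weis's $R$-boundedness characterization) then supplies maximal $L^s$-regularity, so
$$\|\partial_t\u\|_{L^s(0,T;L^q)} + \|A\u\|_{L^s(0,T;L^q)} \le C\|P_q\psi\|_{L^s(0,T;L^q)} \le C\|\psi\|_{L^s(0,T;L^q)},$$
with $C$ independent of $T$ because the first eigenvalue of $A$ on a bounded domain is strictly positive and the Stokes semigroup therefore decays exponentially.

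To convert $\|A\u\|_{L^q}$ into $\|\D\u\|_{L^q}$ and to recover $\pi$, I note that once $\u \in L^s(0,T;D(A))$ is in hand, the pressure is forced by $\nabla\pi = (I-P_q)(\psi+\mu\D\u)$, or equivalently by the weak Neumann problem $\D\pi = \Dv\psi$ in $\O$, $\partial_\nu\pi = (\mu\D\u+\psi)\cdot\nu$ on $\partial\O$, normalized by $\int_\O\pi\,dx = 0$. Boundedness of $I-P_q$ on $L^q$ combined with the elliptic Stokes regularity bound $\|\D\u\|_{L^q} + \|\nabla\pi\|_{L^q} \le C(\|A\u\|_{L^q} + \|\psi\|_{L^q})$ then produces the stated estimate on $(\D\u,\nabla\pi,\u_t)$ simultaneously.

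The principal obstacle is establishing the bounded $H^\infty$-calculus (equivalently, bounded imaginary powers) of the Stokes operator on a $C^{2+\beta}$ bounded domain, which is substantially deeper than mere generation of an analytic semigroup. The standard route is localization: prove the estimate first on $\R^3$ and on the half-space $\R^3_+$ by Fourier or potential-theoretic analysis of the explicit Stokes kernel, flatten the boundary via the $C^{2+\beta}$ charts, glue the resulting local estimates with a partition of unity, and absorb the first-order commutator errors as lower-order perturbations by taking the cutoff diameter sufficiently small. Time-independence of the constant is then preserved because the global semigroup on $\O$ decays exponentially.
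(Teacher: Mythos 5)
Your proposal is mathematically sound, but note that the paper gives no proof of this proposition at all: it is quoted as a known result from Theorem 3.2 of Danchin \cite{DR}, which in turn rests on exactly the machinery you describe. So the comparison is between a citation and a reconstruction of the literature behind it. Your route (Helmholtz projection $P_q$, Stokes operator $A=-\mu P_q\D$ on $L^q_\sigma$ with domain $W^{2,q}\cap W^{1,q}_0\cap L^q_\sigma$, analytic semigroup via Solonnikov--Giga resolvent bounds, bounded imaginary powers by Giga--Sohr, Dore--Venni/Weis maximal $L^s$-regularity on the UMD space $L^q_\sigma$, pressure recovered from $\nabla\pi=(I-P_q)(\psi+\mu\D\u)$ together with the Stokes elliptic estimate $\|\u\|_{W^{2,q}}\le C\|A\u\|_{L^q}$) is the standard proof of the cited result, and your explanation of why $C$ is independent of $T$ --- invertibility of $A$ on a bounded domain, exponential decay of the Stokes semigroup, hence global-in-time maximal regularity on $\R^+$ --- is the correct mechanism and is precisely what the application in Lemma \ref{L1} needs, since that lemma is used uniformly in $T$. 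The one caveat is bookkeeping rather than substance: the bounded $H^\infty$-calculus/BIP step you identify as the principal obstacle is itself a deep theorem whose localization proof you only sketch, so at the level of rigor your argument, like the paper's, ultimately leans on the literature; that is appropriate here, since the proposition serves as a black box. If you write this up, state uniqueness explicitly as well: for $\psi=0$ the projected equation gives $\partial_t\u+A\u=0$ with $\u(0)=0$, hence $\u\equiv0$, and then $\nabla\pi=0$ with the normalization $\int_\O\pi\,dx=0$ forces $\pi\equiv0$.
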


\begin{proof}[Proof of Lemma \ref{L1}]
To begin with, we first introduce two new functions $f_1$ and $f_2$, which satisfy respectively
\begin{equation}\label{l3}
 \begin{cases}
   \partial_t f_1-\mu\D f_1+\nabla h=g\\
\Dv f_1=0\\
f_1(0)=0,\quad f_1|_{\partial\O}=0,
 \end{cases}
\end{equation}
and
\begin{equation}\label{l4}
 \begin{cases}
   \partial_t f_2-\mu\D f_2-(\mu+\lambda)\nabla\Dv f_2=0\\
f_2(0)=f_0,\quad f_2|_{\partial\O}=0.
 \end{cases}
\end{equation}
Notice that $$f=f_1+f_2$$ by the linearity of the equation
\eqref{l1}. For Stokes system \eqref{l3}, using Proposition
\ref{SE}, there exists a solution $f_1$ to \eqref{l3} and it
satisfies
$$\|f_1\|_{\mathcal{W}(0,T)}+\|\nabla h\|_{L^p(0,T; L^q(\O))}\le C\|g\|_{L^p(0,T; L^q(\O))},$$
where the positive constant $C$ depends on $p,q, \O$ and does not depend on $T$.
On the other hand, by the maximal regularity theorem of parabolic equations, there exists a solution $f_2$ 
(extended to zero outside $\O$) to \eqref{l4} and $f_2$ satisfies
$$\|f_2\|_{\mathcal{W}(0,T)}\le C\|f_0\|_{V_0^{p,q}},$$
where the positive constant $C$ depends on $\mu, \lambda, p, q$ and does not depend on $T$.
Hence, we have
\begin{equation*}
\begin{split}
 \|f\|_{\mathcal{W}(0,T)}+\|\nabla h\|_{L^p(0,T; L^q(\O))}&\le \|f_1\|_{\mathcal{W}(0,T)}+\|f_2\|_{\mathcal{W}(0,T)}+\|\nabla h\|_{L^p(0,T; L^q(\O))}\\
&\le C\left(\|g\|_{L^p(0,T; L^q(\O))}+\|f_0\|_{V_0^{p,q}}\right),
\end{split}
\end{equation*}
where the positive constant $C$ depends on $p,q, \O$ and does not depend on $T$.

\end{proof}

\bigskip

\section{Uniform Estimates} \label{estimates}

Due to Theorem \ref{T20}, for any given initial data, the local existence and uniqueness of strong solution can be established. In order to extend the local solution to a global one, we need to establish a series of uniform bounds. That is the main objective of this section.

Throughout this section, we assume that over the time interval
$[0,T]$, for a given sufficiently small positive number $R<1$, the
following bounds hold
\begin{equation}\label{a4}
 \begin{cases}
  \|\u\|_{\mathcal{W}(0,T)}\le R;\\
\|\r\|_{L^\infty(0,T; W^{1,q}(\O))}\le R;\\
\|E\|_{L^\infty(0,T; W^{1,q}(\O))}\le R.
 \end{cases}
\end{equation}
The main goal of this section is to obtain some uniform bounds on $\nabla\r$ and $\nabla E$ in $L^p(0,T; L^q(\O))$ (see Corollary \ref{c1} below).

To begin with, notice that since $q>3$, \eqref{a4} will imply
$$\|\r\|_{L^\infty(\O)}\le C\|\r\|_{W^{1,q}(\O)}\le CR<\f{1}{2},$$
if $R$ is sufficiently small.
Similarly, one can assume
$$\|E_{ij}\|_{L^\infty}\le CR\le 1,\quad\textrm{for all}\quad i,j=1,2,3.$$

\subsection{Dissipation of the gradient of the density}
To prove Theorem \ref{T1} valid, we need first to establish a uniform
estimate on the dissipation of the gradient of the density.

\begin{Lemma}\label{E2}
Under the same condition as Theorem \ref{T1}, the solution $(\r,\u, E)$ satisfies
\begin{equation}\label{12345}
\|\nabla\r\|_{L^p(0,T; L^q(\O))}\le C(R^2+R\|\nabla E\|_{L^p(0,T;
L^q(\O))}),
\end{equation}
where $C=C(p,q,\mu,\lambda,  \O).$
\end{Lemma}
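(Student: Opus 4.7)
The plan is to apply Lemma \ref{L1} to a carefully recast form of the momentum equation \eqref{e12}. The guiding idea is that, modulo genuinely higher-order terms in $(\r, E)$, the elastic force $\Dv((1+\r)\F\F^\top)$ combines with the pressure gradient to produce a single gradient $\na h$; Lemma \ref{L1} then controls $\|\na h\|_{L^p(0,T;L^q(\O))}$, from which $\|\na\r\|_{L^p(0,T;L^q(\O))}$ is recovered since $\na h$ is comparable to $\na\r$ for small $\r$.

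First I will perform the algebraic reduction. Expanding $\Dv\bigl((1+\r)(I+E)(I+E)^\top\bigr)$ in index form yields seven terms, and the perturbed form of identity \eqref{Dv1}, namely $\na\r + \Dv E^\top + \Dv(\r E^\top) = 0$, kills three of them, leaving
\begin{equation*}
\Dv\bigl((1+\r)\F\F^\top\bigr) = \Dv E + \Dv(EE^\top) + \Dv(\r E) + \Dv(\r EE^\top).
\end{equation*}
Next I invoke the consequence of \eqref{curl1} mentioned in the introduction, that $\Dv(E - E^\top)$ is a higher-order term satisfying $\|\Dv(E-E^\top)\|_{L^q(\O)} \le CR\|\na E\|_{L^q(\O)}$, to split $\Dv E = \Dv E^\top + \Dv(E - E^\top)$; applying \eqref{Dv1} once more to $\Dv E^\top$ then produces the key cancellation
\begin{equation*}
\Dv\bigl((1+\r)\F\F^\top\bigr) = -\na\r + \Dv(E-E^\top) + \Dv\bigl(\r(E-E^\top)\bigr) + \Dv(EE^\top) + \Dv(\r EE^\top),
\end{equation*}
in which every summand besides $-\na\r$ is a divergence of a product carrying at least two small factors.

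Using the continuity equation \eqref{e11} to rewrite $((1+\r)\u)_t + \Dv((1+\r)\u\otimes\u) = (1+\r)(\u_t + \u\cdot\na\u)$, the momentum equation becomes
\begin{equation*}
\u_t - \mu\D\u - (\mu+\l)\na\Dv\u + \na h = g,
\end{equation*}
where $h := P(1+\r) + \r - P(1)$, so that $\na h = (P'(1+\r)+1)\na\r$, and
$g := -\r\u_t - (1+\r)\u\cdot\na\u + \Dv(E-E^\top) + \Dv(\r(E-E^\top)) + \Dv(EE^\top) + \Dv(\r EE^\top).$
Applying Lemma \ref{L1} with $f = \u$ and $f_0 = \u_0$ yields
$\|\u\|_{\mathcal{W}(0,T)} + \|\na h\|_{L^p(0,T;L^q(\O))} \le C\bigl(\|g\|_{L^p(0,T;L^q(\O))} + \|\u_0\|_{V_0^{p,q}}\bigr).$
Because $P'(1+\r) + 1 \ge c_0 > 0$ whenever $\|\r\|_{L^\infty}$ is sufficiently small, I recover $\|\na\r\|_{L^p(0,T;L^q(\O))} \le C\|\na h\|_{L^p(0,T;L^q(\O))}$.

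It remains to bound $g$ termwise. The convective pieces yield $\|\r\u_t\|_{L^p(0,T;L^q(\O))} \le \|\r\|_{L^\infty(0,T;L^\infty(\O))}\|\u_t\|_{L^p(0,T;L^q(\O))} \le CR^2$ and $\|(1+\r)\u\cdot\na\u\|_{L^p(0,T;L^q(\O))} \le C\|\u\|_{L^\infty(\O\times(0,T))}\|\na\u\|_{L^p(0,T;L^q(\O))} \le CR^2$, using the embedding $\mathcal{W}(0,T) \hookrightarrow L^\infty(\O\times(0,T))$ valid for $p\ge 2$, $q>3$ together with \eqref{a4}. Each of the four elastic divergences carries at least one factor ($E$, $E-E^\top$, $\r$, or a product) bounded in $L^\infty$ by $CR$ hitting a gradient in $L^p(0,T;L^q(\O))$, yielding $CR\bigl(\|\na E\|_{L^p(0,T;L^q(\O))} + \|\na\r\|_{L^p(0,T;L^q(\O))}\bigr)$. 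Together with $\|\u_0\|_{V_0^{p,q}} \le R^2$ this gives
$\|\na\r\|_{L^p(0,T;L^q(\O))} \le C\bigl(R^2 + R\|\na E\|_{L^p(0,T;L^q(\O))} + R\|\na\r\|_{L^p(0,T;L^q(\O))}\bigr),$
and absorbing the last term into the left for $R$ sufficiently small yields \eqref{12345}. The main obstacle is executing the algebraic cancellation cleanly: one must combine \eqref{Dv1} and \eqref{curl1} in tandem so that every residual piece of the elastic force carries a small $L^\infty$ factor, which is exactly what produces the crucial $R$-prefactor on $\|\na E\|_{L^p(0,T;L^q(\O))}$ (rather than a bare constant) needed to close the bootstrap in the subsequent sections.
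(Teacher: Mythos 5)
Your proof is correct and reaches \eqref{12345} by the same high-level mechanism as the paper (reduce the momentum equation to the form \eqref{l1}, invoke Lemma~\ref{L1}, and absorb the $CR\|\nabla\r\|_{L^p(0,T;L^q(\O))}$ term), but the algebraic reduction is genuinely different from the one the paper executes. The paper first applies the nonlocal Riesz-type operator $\mathcal{R}_{ij}=\Delta^{-1}\partial_{x_j}\partial_{x_i}$ to \eqref{e1e2}, producing a gradient-structured equation for $f=\mathcal{R}_{ij}\u$ with $h=\alpha\r$, $\alpha=1+P'(1)$ a constant; and to extract $-\partial_{x_j}\r$ from $\mathcal{R}_{ij}\Dv E$ it chains \eqref{curl1} through \eqref{m} into the \emph{determinant} constraint \eqref{Dv}, i.e. $\tr E=-\r+\text{h.o.t.}$. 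You instead apply Lemma~\ref{L1} to $f=\u$ directly, with the variable-coefficient $h=P(1+\r)+\r-P(1)$ (which requires, as you note, the extra remark that $P'(1+\r)+1$ is bounded below for $\|\r\|_{L^\infty}$ small), and you extract $-\nabla\r$ by using the conserved identity \eqref{Dv1} \emph{twice}, together with \eqref{q1} (the fact that $\Dv(E-E^\top)=-\mathcal{N}$ is quadratic, a consequence of \eqref{curl1} alone). Your route avoids the determinant identity \eqref{Dv} entirely and is arguably closer to the ``key observation'' advertised in the introduction. Two points worth being aware of: (i) the identity \eqref{q1} is stated in the paper only in Corollary~\ref{c1}, \emph{after} Lemma~\ref{E2} --- but its derivation there uses only \eqref{curl1} and div--curl algebra, not Lemmas~\ref{E2} or \ref{E1}, so there is no circularity and you are entitled to it; and (ii) the bound $\|\Dv(E-E^\top)\|_{L^q}\le CR\|\nabla E\|_{L^q}$ is a $L^q$-boundedness fact for the nonlocal Calder\'on--Zygmund operator $(-\Delta)^{-1}\Dv\partial_{x_j}$ after zero-extension, so the nonlocality the paper introduces via $\mathcal{R}_{ij}$ reappears in your argument inside $\mathcal{N}$; neither route is more elementary on that score. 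The rest of your termwise bounds on $g$ match the paper's in substance.
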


\begin{proof}
To begin with, we introduce the zero-th  order differential operator
$$\mathcal{R}_{ij}=\D^{-1}\partial_{x_j}\partial_{x_i},$$ which is
defined as
$$\mathcal{R}_{ij}f=-\mathcal{F}^{-1}\left(\f{\xi_i\xi_j}{|\xi|^2}\mathcal{F}f\right),$$
where $\mathcal{F}$ denotes the Fourier transformation. Note that
$$\|\mathcal{R}_{ij}f\|_{L^q}\le C\|f\|_{L^q},\quad\textrm{for}\quad 1<q<\infty\quad\textrm{and}\quad 1\le i,j\le 3.$$ 
Extending the perturbations of the density and the deformation gradient by $0$ outside the domain $\O$, applying
the operator $\mathcal{R}_{ij}$ to \eqref{e1e2}, and using the equation \eqref{e1e1}, we obtain
\begin{equation}\label{x1}
\begin{split}
&\mathcal{R}_{ij}\left((1+\r)\u_t+(1+\r)\u\cdot\nabla\u\right)-(2\mu+\lambda)\partial_{x_j}\Dv
\u+\alpha\partial_{x_j} \r\\&\quad=\mathcal{R}_{ij}\left(\r\tr
E+(1+\r)E_{lk}\partial_{x_l}E_{ik}\right)+\mathcal{M}_j+\left(P'(1)-P'(1+\r)\right)\partial_{x_j}\r,
\end{split}
\end{equation}
where $$\alpha=(1+P'(1)),$$
and we used the fact that, from \eqref{curl1} and \eqref{Dv},
\begin{equation}\label{m}
\begin{split}
\mathcal{R}_{ij}\Dv
E&=\D^{-1}\partial_{x_j}\partial_{x_i}\partial_{x_k}E_{ik}
=\D^{-1}\partial_{x_j}\partial_{x_k}\partial_{x_i}E_{ik}\\
&
=\D^{-1}\partial_{x_j}\partial_{x_k}\partial_{x_k}E_{ii}+\mathcal{R}_{jk}\left(E_{li}\nabla_l
E_{ik}-E_{lk}\nabla_l E_{ii}\right)\\
&=\partial_{x_j}\tr E+\mathcal{R}_{jk}\left(E_{li}\nabla_l
E_{ik}-E_{lk}\nabla_l E_{ii}\right)\\
&
=-\partial_{x_j}\r-\partial_{x_j}(\r\tr E)+\partial_{x_j}\left((1+\r)\left[\f{1}{2}\left[\tr(E^2)-(\tr E)^2\right]-\det E\right]\right)\\
&\qquad+\mathcal{R}_{jk}\left(E_{li}\nabla_l
E_{ik}-E_{lk}\nabla_l E_{ii}\right)\\
&=:-\partial_{x_j}\r+\mathcal{M}_j,
\end{split}
\end{equation}
with
\begin{equation*}
 \begin{split}
\mathcal{M}_j=\partial_{x_j}\left((1+\r)\left[\f{1}{2}\left[\tr(E^2)-(\tr
E)^2\right]-\det
E\right]\right)+\mathcal{R}_{jk}\left(E_{li}\nabla_l
E_{ik}-E_{lk}\nabla_l E_{ii}\right).
 \end{split}
\end{equation*}

We rewrite the equation \eqref{x1} as
\begin{equation}\label{x2}
\begin{split}
&(\mathcal{R}_{ij}\u)_t-\mu\D\mathcal{R}_{ij}\u-(\mu+\lambda)\nabla\Dv
\mathcal{R}_{ij}\u+\alpha\partial_{x_j}
\r\\&\quad=-\mathcal{R}_{ij}\left(\r\u_t+(1+\r)\u\cdot\nabla\u\right)+\mathcal{R}_{ij}\left(\r\tr
E+(1+\r)E_{lk}\partial_{x_l}E_{ik}\right)+\mathcal{M}_j\\
&\qquad+\left(P'(1)-P'(1+\r)\right)\partial_{x_j}\r.
\end{split}
\end{equation}
According to Lemma \ref{L1}, we have
\begin{equation}\label{x3}
 \begin{split}
&  \|\mathcal{R}_{ij}\u\|_{\mathcal{W}(0,T)}+\alpha\|\nabla \r\|_{L^p(0,T; L^q(\O))}\\
&\le C\Big(\|\mathcal{R}_{ij}\u_0\|_{V_0^{p,q}}+\|\mathcal{R}_{ij}\left(\r\u_t+(1+\r)\u\cdot\nabla\u\right)\|_{L^p(0,T; L^q(\O))}\\
&\quad+\|\mathcal{R}_{ij}\left(\r\tr E+(1+\r)E_{lk}\partial_{x_l}E_{ik}\right)\|_{L^p(0,T;L^q(\O))}\\
&\quad+\|\mathcal{M}_j\|_{L^p(0,T;
L^q(\O))}+\left\|\left(P'(1)-P'(1+\r)\right)\partial_{x_j}\r\right\|_{L^p(0,T;
L^q(\O))}\Big).
 \end{split}
\end{equation}

Next, we  estimate the terms on the right hand side of the above inequality. Indeed,
\begin{equation*}
 \begin{split}
  &\|\mathcal{R}_{ij}\left(\r\u_t+(1+\r)\u\cdot\nabla\u\right)\|_{L^p(0,T; L^q(\O))}\\&\le\|\r\u_t+(1+\r)\u\cdot\nabla\u\|_{L^p(0,T; L^q(\O))}\\
&\le \|\r\|_{L^\infty}\|\u_t\|_{L^p(0,T; L^q(\O))}+(1+\|\r\|_{\infty})\|\u\|_{L^\infty(0,T; L^q(\O))}\|\nabla\u\|_{L^p(0,T; L^\infty(\O))}\\
&\le C\Big(\|\r\|_{L^\infty}\|\u_t\|_{L^p(0,T; L^q(\O))}+(1+\|\r\|_{\infty})\|\u\|_{L^\infty(0,T; L^q(\O))}\|\u\|_{L^p(0,T; W^{2,q}(\O))}\Big)\\
&\le CR^2;
 \end{split}
\end{equation*}

\begin{equation*}
 \begin{split}
&\|\mathcal{R}_{ij}\left(\r\tr E+(1+\r)E_{lk}\partial_{x_l}E_{ik}\right)\|_{L^p(0,T;L^q(\O))}\\&\le\|\r\tr E+(1+\r)E_{lk}\partial_{x_l}E_{ik}\|_{L^p(0,T;L^q(\O))}\\
&\le \|E\|_{L^\infty}\|\r\|_{L^p(0,T; L^q(\O))}+(1+\|\r\|_{L^\infty})\|E\|_{L^\infty}\|\nabla E\|_{L^p(0,T; L^q(\O))}\\
&\le C\left(R^2+R\|\nabla E\|_{L^p(0,T; L^q(\O))}\right);
 \end{split}
\end{equation*}

\begin{equation}\label{m1}
 \begin{split}
\|\mathcal{M}_j\|_{L^p(0,T; L^q(\O))}&\le
\|\mathcal{R}\left(E_{li}\nabla_l
E_{ik}-E_{lk}\nabla_l E_{ii}\right)\|_{L^p(0,T;L^q(\O))}\\
&\quad+\left\|\partial_{x_j}\left((1+\r)\left[\f{1}{2}\left[\tr(E^2)-(\tr E)^2\right]-\det E\right]\right)\right\|_{L^p(0,T; L^q(\O))}\\
& \le C\Big(\|E_{li}\nabla_l
E_{ik}-E_{lk}\nabla_l E_{ii}\|_{L^p(0,T;L^q(\O))}\\
&\quad+\|\nabla\r\|_{L^p(0,T; L^q(\O))}(\|E\|_{L^\infty}^2+\|E\|_{L^\infty}^3)\\
&\quad+(1+\|\r\|_{L^\infty})\|\nabla E\|_{L^p(0,T; L^q(\O))}(\|E\|_{L^\infty}+\|E\|_{L^\infty}^2)\Big)\\
&\le C\Big(R\|\nabla E\|_{L^p(0,T; L^q(\O))}+R\|\nabla\r\|_{L^p(0,T; L^q(\O))}\Big),
\end{split}
\end{equation}
since $R<1$;

\begin{equation*}
\begin{split}
\left\|\left(P'(1)-P'(1+\r)\right)\partial_{x_j}\r\right\|_{L^p(0,T;
L^q(\O))}&\le \eta\left\|\partial_{x_j}\r\right\|_{L^p(0,T;
L^q(\O))}\|\r\|_{L^\infty}\\
&\le CR\|\nabla\r\|_{L^p(0,T; L^q(\O))},
\end{split}
\end{equation*}
here we used the fact
$$P'(1+\r)-P'(1)=P''(z)\r\quad\textrm{for some $z$ between 1 and
$1+\r$},$$ and hence
\begin{equation*}
\begin{split}
\|P'(1+\r)-P'(1)\|_{L^\infty}\le \eta\|\r\|_{L^\infty}
\end{split}
\end{equation*}
with
$$\eta=\sup_{\f12\le z\le \f32}|P''(z)|.$$

Substituting those estimates back into \eqref{x3}, we obtain
\begin{equation*}
 \begin{split}
  \|\mathcal{R}_{ij}\u\|_{\mathcal{W}(0,T)}&+\alpha\|\nabla \r\|_{L^p(0,T; L^q(\O))}\\
  &\le C\Big(R^2+R\|\nabla E\|_{L^p(0,T; L^q(\O))}+R\|\nabla\r\|_{L^p(0,T; L^q(\O))}\Big).
 \end{split}
\end{equation*}
Assuming that $R$ is so small that $CR<\f{\alpha}{2}$, we deduce from the above inequality that
\begin{equation}\label{x4}
 \begin{split}
  \|\mathcal{R}_{ij}\u\|_{\mathcal{W}(0,T)}+\f{\alpha}{2}\|\nabla \r\|_{L^p(0,T; L^q(\O))}&\le C\Big(R^2+R\|\nabla E\|_{L^p(0,T; L^q(\O))}\Big).
 \end{split}
\end{equation}
The proof is complete.
\end{proof}

\subsection{Dissipation of the deformation gradient}
The main difficulty of the proof of Theorem \ref{T1} is to obtain
estimates on the dissipation of the deformation gradient. This is
partly because of the transport structure of equation \eqref{e13}.
It is worthy of pointing out that it is extremely difficult to
directly deduce the dissipation of the deformation gradient.
Fortunately, for the viscoelastic fluids system \eqref{e1}, as we
can see in \cite{CZ, LLZH2, LLZH, LZ, LLZ, LZP}, some sort
of combinations between the gradient of the velocity and the
deformation gradient indeed induce some dissipation. To make this
statement more precise, we rewrite the momentum equation
\eqref{e12} as, using \eqref{Dv1}
\begin{equation}\label{511}
\begin{split}
\partial_t \u-\mu\Delta \u-\Dv E&=-(1+\r)(\u\cdot\nabla)\u-\nabla P(1+\r)\\
&\quad+\r\Dv E+(1+\r)E_{jk}\partial_{x_j}E_{ik}-\r\partial_t\u,
\end{split}
\end{equation}
and prove the following estimate:

\begin{Lemma}\label{E1}
Under the same condition as Theorem \ref{T1}, the solution $(\r,\u, E)$ satisfies
\begin{equation}\label{5113}
\|\nabla E\|_{L^p(0,T; L^q(\O))} \le CR,
\end{equation}
where $C=C(p,q,\mu,\lambda)$.
\end{Lemma}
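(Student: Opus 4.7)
My plan is to obtain $L^p(0,T;L^q(\O))$-control of $\nabla E$ from two sources: the parabolic regularity of $\u$ encoded in $\|\u\|_{\mathcal{W}(0,T)}\le R$ together with the momentum equation in the form \eqref{511}, and the dissipation of $\nabla\r$ already established in Lemma \ref{E2}. Equation \eqref{511} couples $\u$ and $E$ through the term $-\Dv E$, so it gives a direct $L^p_tL^q_x$ bound on $\Dv E$; the remaining task of passing from $\Dv E$ to the full $\nabla E$ is supplied by \eqref{curl1}, which makes the anti-symmetric part of $\nabla E$ purely quadratic. The argument is a closed bootstrap carried out in three steps.

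First, I reconstruct $\nabla E$ from $\Dv E$. Extending $E$ by zero outside $\O$ exactly as in the proof of Lemma \ref{E2}, I start from the formal identity
\begin{equation*}
\partial_{x_k}E_{ij}=\D^{-1}\partial_{x_k}\partial_{x_l}(\partial_{x_l}E_{ij}),
\end{equation*}
and decompose $\partial_{x_l}E_{ij}=\partial_{x_j}E_{il}+(\partial_{x_l}E_{ij}-\partial_{x_j}E_{il})$. The first summand yields $\D^{-1}\partial_{x_k}\partial_{x_j}(\Dv E)_i$, while \eqref{curl1} identifies the second with the quadratic expression $E_{mj}\nabla_m E_{il}-E_{ml}\nabla_m E_{ij}$. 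The $L^q$-boundedness of the operators $\D^{-1}\partial_{x_\cdot}\partial_{x_\cdot}$, together with the bound $\|E\|_{L^\infty}\le CR$, then gives
\begin{equation*}
\|\nabla E\|_{L^p(0,T;L^q(\O))}\le C\|\Dv E\|_{L^p(0,T;L^q(\O))}+CR\|\nabla E\|_{L^p(0,T;L^q(\O))},
\end{equation*}
and the last term is absorbed on the left for $R$ small.

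Second, I bound $\Dv E$ via the momentum equation. Collecting both occurrences of $\Dv E$ in \eqref{511} on one side yields
\begin{equation*}
(1+\r)\Dv E=\partial_t\u-\mu\D\u+(1+\r)(\u\cdot\nabla)\u+\nabla P(1+\r)-(1+\r)E_{jk}\partial_{x_j}E_{ik}+\r\partial_t\u.
\end{equation*}
Since $|1+\r|\ge 1/2$, taking $L^p(0,T;L^q(\O))$-norms bounds the first two terms by $C\|\u\|_{\mathcal{W}(0,T)}\le CR$; the convective and $\r\partial_t\u$ contributions are $O(R^2)$ by exactly the estimates used in the proof of Lemma \ref{E2}; the elastic quadratic is $\le CR\|\nabla E\|_{L^p(0,T;L^q(\O))}$; and the pressure is dominated by $C\|\nabla\r\|_{L^p(0,T;L^q(\O))}\le C(R^2+R\|\nabla E\|_{L^p(0,T;L^q(\O))})$ via Lemma \ref{E2}. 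Altogether,
\begin{equation*}
\|\Dv E\|_{L^p(0,T;L^q(\O))}\le CR+CR\|\nabla E\|_{L^p(0,T;L^q(\O))}.
\end{equation*}

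Third, combining the two estimates yields $\|\nabla E\|_{L^p(0,T;L^q(\O))}\le CR+CR\|\nabla E\|_{L^p(0,T;L^q(\O))}$, and choosing $R$ small enough that $CR<1/2$ absorbs the last term to conclude $\|\nabla E\|_{L^p(0,T;L^q(\O))}\le CR$. The main obstacle is the first step: the Calder\'on--Zygmund representation is carried out on the zero extension of $E$, which does not vanish on $\partial\O$, so one must justify the distributional manipulations parallel to the derivation of \eqref{m} in Lemma \ref{E2} and verify that the boundary singularity produced by the extension does not spoil the interior $L^q(\O)$ bound. The key algebraic move is replacing $\partial_{x_l}E_{ij}$ by $\partial_{x_j}E_{il}$ modulo quadratic terms via \eqref{curl1}: this is precisely what converts the ``bad'' derivative $\partial_{x_l}$ into the divergence and allows a single Riesz transform to close the estimate.
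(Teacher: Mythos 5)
Your argument is correct and reaches the same inequality that the paper uses, but Step 2 follows a genuinely different and more elementary route than the paper's. The paper introduces the auxiliary function $Z_1=\mathcal{L}(\Dv E)$, where $\mathcal{L}$ inverts the Lam\'e operator with Dirichlet data, derives a parabolic system \eqref{515} for $Z=\u-\tfrac{1}{\mu}Z_1$, invokes maximal regularity (Proposition \ref{T3}) to get $\|Z\|_{\mathcal{W}(0,T)}\lesssim R+R\|\nabla E\|_{L^pL^q}$, and then recovers $\Dv E$ from $Z_1=\mu(\u-Z)$ through elliptic regularity. You bypass all of that by observing that \eqref{511} lets one isolate $(1+\r)\Dv E$ algebraically and that the two leading linear terms $\partial_t\u-\mu\D\u$ (and in fact $-(\mu+\lambda)\nabla\Dv\u$, dropped in \eqref{511} by an apparent typo but immaterial here) are already controlled in $L^p(0,T;L^q(\O))$ by the ansatz $\|\u\|_{\mathcal{W}(0,T)}\le R$ — so the parabolic machinery is not required for this lemma. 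In the paper's formulation the same $O(R)$ contribution resurfaces as $\tfrac{1}{\mu}\|\u\|_{L^pL^q}$ inside $\mathcal{H}_2$ (see \eqref{518}), so the two routes lead to the identical estimate $\|\Dv E\|_{L^pL^q}\lesssim R+R\|\nabla E\|_{L^pL^q}$; your version is shorter and exposes this more clearly. Your Step 1 is essentially the paper's combination of \eqref{5112} with the div--curl inequality, just written explicitly through the Riesz transforms $\D^{-1}\partial_{x_k}\partial_{x_l}$ applied to the antisymmetric part given by \eqref{curl1}, and the two formulations are equivalent. The technical concern you flag — that the Calder\'on--Zygmund representation, and equally the paper's unqualified inequality $\|\nabla f\|_{L^r}\le C(\|\Dv f\|_{L^r}+\|\textrm{curl}\,f\|_{L^r})$, requires either the zero extension of $E$ to lie in $W^{1,q}(\R^3)$ (which fails unless $E|_{\partial\O}=0$) or a bounded-domain div--curl estimate that would carry an extra lower-order or boundary term — is a genuine gap shared by the paper's own proof, and you are right to name it; it does not distinguish your argument from theirs.
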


\begin{proof}
Now we introduce the function $Z_1(x,t)=\mathcal{L}(\Dv E)$ as
\begin{equation}\label{512}
\begin{cases}
-\mu\D \mathcal{L}(f)-(\mu+\lambda)\nabla\Dv\mathcal{L}(f)=f,\\
\mathcal{L}(f)|_{\partial\O}=0.
\end{cases}
\end{equation}
Then, \eqref{511} becomes
\begin{equation}\label{513}
\partial_t \u-\mu\Delta\left(\u-\f{1}{\mu}Z_1\right)=\mathcal{H}_1,
\end{equation}
where
\begin{equation*}
\begin{split}
\mathcal{H}_1=-(1+\r)(\u\cdot\nabla)\u-\nabla
P(1+\r)+\r\Dv E+(1+\r)E_{jk}\partial_{x_j}E_{ik}-\r\partial_t\u.
\end{split}
\end{equation*}
Also, from \eqref{e13}, we have
\begin{equation}\label{514}
\begin{split}
\f{\partial Z_1}{\partial t}&=\mathcal{L}\left(\Dv \f{\partial E}{\partial t}\right)\\
&=\mathcal{L}\Big(\Dv (\nabla \u+\nabla \u E-(\u\cdot\nabla)E)\Big).
\end{split}
\end{equation}
From \eqref{513} and \eqref{514}, we deduce, denoting
$Z=\u-\f{1}{\mu}Z_1$,
\begin{equation}\label{515}
\begin{split}
\partial_t Z-\mu\Delta Z-(\mu+\lambda)\nabla\Dv Z=\mathcal{H}:=\mathcal{H}_1-\mathcal{H}_2,
\end{split}
\end{equation}
where
$$\mathcal{H}_2=\f{1}{\mu}\mathcal{L}(\Dv (\nabla \u+\nabla \u E-(\u\cdot\nabla)E)).$$
Equation \eqref{515} with Proposition \ref{T3} implies that
\begin{equation}\label{516}
\begin{split}
\|Z\|_{\mathcal{W}(0,T)}&\le C(p,q)\left(\|Z(0)\|_{X_p^{2(1-\f{1}{p})}}+\|\mathcal{H}\|_{L^p(0,T;L^q(\O))}\right)\\
&\le C(p,q)\left(R+\|\mathcal{H}\|_{L^p(0,T;L^q(\O))}\right).
\end{split}
\end{equation}

Next, we estimate $\|\mathcal{H}_i\|_{L^p(0,T;L^q(\O))}$, $i=1,2$. Indeed, for $\mathcal{H}_1$, using \eqref{12345}, we
have
\begin{equation}\label{517}
\begin{split}
\|\mathcal{H}_1\|_{L^p(0,T; L^q(\O))}&\le (1+\|\r\|_{L^\infty(Q_T)})\|\u\|_{L^\infty(0,T; L^q(\O))}\|\nabla \u\|_{L^p(0,T; L^\infty(\O))}\\
&\quad+\kappa\|\nabla\r\|_{L^p(0,T; L^q(\O))}+\|\r\|_{L^\infty(Q_T)}\|\nabla E\|_{L^p(0,T; L^q(\O))}\\
&\quad+(1+\|\r\|_{L^\infty(Q_T)})\|E\|_{L^\infty(Q_T)}\|\nabla E\|_{L^p(0,T; L^q(\O))}\\
&\quad+\|\r\|_{L^\infty(Q_T)}\|\partial_t\u\|_{L^p(0,T; L^q(\O))}\\
&\le C\Big(R+R\|\u\|_{L^p(0,T; W^{2,q}(\O))}+R\|\nabla E\|_{L^p(0,T; L^q(\O))}\Big)\\
&\le C\Big(R+R\|\nabla E\|_{L^p(0,T; L^q(\O))}\Big),
\end{split}
\end{equation}
since $R<1$ with $$\kappa=\sup_{|z|\le\frac{1}{2}}P'(1+z).$$
For $\mathcal{H}_2$, we have
$$|\mathcal{H}_2|\le \f{1}{\mu}|\u|+\f{1}{\mu}\mathcal{L}(\Dv(\nabla \u E-(\u\cdot\nabla)E)),$$
and
\begin{equation*}
\begin{split}
\|\nabla \u E-(\u\cdot\nabla)E\|_{L^p(0,T; L^{q}(\O))}&\le \|\nabla\u\|_{L^p(0,T; L^q(\O))}\|E\|_{L^\infty}\\
&\quad+\|\u\|_{L^\infty}\|\nabla E\|_{L^p(0,T; L^q(\O))}\\
&\le R^{2}.
\end{split}
\end{equation*}
Hence, one can estimate, by the standard estimates of elliptic equations,
\begin{equation}\label{518}
\begin{split}
\|\mathcal{H}_2\|_{L^p(0,T; L^{q}(\O))}&\le
\f{1}{\mu}\|\u\|_{L^p(0,T;L^q(\O))}+\f{1}{\mu}\|\nabla \u E-(\u\cdot\nabla)E\|_{L^p(0,T; L^{q}(\O))}\\
&\le C(R+R^{2})\le CR.
\end{split}
\end{equation}
Therefore, from \eqref{517} and \eqref{518}, we obtain
\begin{equation}\label{519}
\begin{split}
\|\mathcal{H}\|_{L^p(0,T; L^q(\O))}&\le
C\Big(R+R\|\nabla E\|_{L^p(0,T;
L^q(\O))}\Big).
\end{split}
\end{equation}
Inequalities \eqref{516} and \eqref{519} imply that
\begin{equation}\label{5110}
\begin{split}
\|Z\|_{L^p(0,T; W^{2,q}(\O))}\le
C\Big(R+R\|\nabla E\|_{L^p(0,T;
L^q(\O))}\Big).
\end{split}
\end{equation}
Hence, we have, from \eqref{512}
\begin{equation}\label{5111}
\begin{split}
\|\Dv E\|_{L^p(0,T; L^q(\O))}&\le \mu\left(\|Z\|_{L^p(0,T; W^{2,q}(\O))}+\|\u\|_{L^p(0,T; W^{2,q}(\O))}\right)\\
&\le C\Big(R+R\|\nabla E\|_{L^p(0,T;
L^q(\O))}\Big)).
\end{split}
\end{equation}
On the other hand, from the identity \eqref{curl1}, we deduce that
\begin{equation}\label{5112}
\begin{split}
\|\textrm{curl }E_i\|_{L^p(0,T; L^q(\O))}&\le 2\|E\|_{L^\infty(Q_T)}\|\nabla E\|_{L^p(0,T; L^q(\O))}\\
&\le C\|E\|_{L^\infty(0,T; W^{1,q}(\O))}\|\nabla E\|_{L^p(0,T;
L^q(\O))}\\&\le CR\|\nabla E\|_{L^p(0,T; L^q(\O))}.
\end{split}
\end{equation}
Combining together \eqref{5111}, \eqref{5112} and the inequality
$$\|\nabla f\|_{L^r}\le C(\|\Dv f\|_{L^r}+\|\textrm{curl}f\|_{L^r}) \textrm{ for all }1<r<\infty,$$ we obtain
\begin{equation*}
\begin{split}
\|\nabla E\|_{L^p(0,T; L^q(\O))} &\le
C\Big(R+R\|\nabla E\|_{L^p(0,T;
L^q(\O))}\Big),
\end{split}
\end{equation*}
and hence, by choosing $CR\le \f{1}{2}$,
one obtains \eqref{5113}.

The proof of Lemma \ref{E1} is complete.
\end{proof}

Notice that,  from \eqref{12345} and \eqref{5113}, one can improve the estimates on the gradient of the density and the divergence of the deformation gradient. Indeed, we have
\begin{Corollary}\label{c1}
Under the same condition as Theorem \ref{T1}, the solution $(\r,\u, E)$ satisfies
\begin{equation}\label{2222}
\|\nabla \r\|_{L^p(0,T; L^{q}(\O))}\le CR^2,
\end{equation}
and
\begin{equation}\label{22222}
 \|\nabla E\|_{L^p(0,T; L^q(\O))}\le CR^2,
\end{equation}
where the positive constant $C$ is independent of $T$.
\end{Corollary}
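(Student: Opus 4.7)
The plan is to feed the bounds from Lemmas \ref{E2} and \ref{E1} back into one another, using the fact that each refines the other by a factor of $R$.

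For \eqref{2222} the argument is immediate: inserting the bound $\|\nabla E\|_{L^p(0,T;L^q(\O))}\le CR$ from \eqref{5113} into the right-hand side of \eqref{12345} gives
\[
\|\nabla\rho\|_{L^p(0,T;L^q(\O))}\le C\bigl(R^{2}+R\cdot CR\bigr)\le CR^{2}
\]
after absorbing the constant.

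For \eqref{22222} I would retrace the proof of Lemma \ref{E1} with the improved bound \eqref{2222} in place of the weaker one from Lemma \ref{E2}. In the estimate \eqref{517} of $\mathcal{H}_{1}$, the only stand-alone factor of $R$ (as opposed to $R^{2}$ or $R\|\nabla E\|$) arose from the pressure contribution $\kappa\|\nabla\rho\|_{L^p(0,T;L^q)}$; with \eqref{2222} this contribution is now $O(R^{2})$, while the remaining terms in $\mathcal{H}_{1}$ are products of two $O(R)$ quantities. The nonlinear piece $\mu^{-1}\mathcal{L}(\Dv(\nabla\u\,E-(\u\cdot\nabla)E))$ of $\mathcal{H}_{2}$ is also $O(R^{2})$. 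Propagating these improvements through \eqref{516}--\eqref{5111} yields $\|\Dv E\|_{L^p(0,T;L^q)}\le C(R^{2}+R\|\nabla E\|)$. Since \eqref{5112} combined with \eqref{5113} already gives $\|\textrm{curl}\,E\|_{L^p(0,T;L^q)}\le CR\cdot CR=CR^{2}$, the standard Hodge-type inequality $\|\nabla E\|_{L^{q}}\le C(\|\Dv E\|_{L^{q}}+\|\textrm{curl}\,E\|_{L^{q}})$ applied row by row produces $\|\nabla E\|_{L^p(0,T;L^q)}\le C(R^{2}+R\|\nabla E\|)$, from which \eqref{22222} follows by absorbing the last term for $R$ sufficiently small.

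The delicate step, and what I expect to be the main obstacle, is the linear piece $\mu^{-1}\mathcal{L}(\Delta\u)$ of $\mathcal{H}_{2}$: the bound in \eqref{518} controls it only by $\mu^{-1}\|\u\|_{L^p(0,T;L^q)}$, which under \eqref{a4} is only of size $R$. I would handle this by bootstrapping on the momentum equation \eqref{511}: once $\|\Dv E\|$ and $\|\mathcal{H}_{1}\|$ are both $O(R^{2})$, treating \eqref{511} as a parabolic equation for $\u$ with initial datum and source of size $R^{2}$ yields $\|\u\|_{\mathcal{W}(0,T)}\le CR^{2}$, and in particular $\|\u\|_{L^p(0,T;L^q)}\le CR^{2}$, which is exactly what is needed for $\|\mathcal{H}_{2}\|$ to drop to $O(R^{2})$ and the absorption argument above to close.
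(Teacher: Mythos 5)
Your argument for \eqref{2222} is exactly the paper's: substitute the bound \eqref{5113} into the right side of \eqref{12345} and absorb the constant. That part is correct.

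For \eqref{22222}, however, the bootstrap you propose is circular and does not close. You correctly identify that retracing Lemma \ref{E1} with the improved density bound still leaves the term $\mu^{-1}\|\u\|_{L^p(0,T;L^q)}$ in $\mathcal{H}_2$, which under \eqref{a4} is only $O(R)$, and you try to repair this by proving $\|\u\|_{\mathcal{W}(0,T)}\le CR^2$ from the momentum equation. But the very estimate you invoke for the momentum equation (compare \eqref{x6} in the paper's Lemma \ref{l51}) contains the term $\|(1+\r)\Dv E\|_{L^p(0,T;L^q)}$, which is controlled by $\|\Dv E\|_{L^p(0,T;L^q)}$ with an $O(1)$ constant; and $\|\Dv E\|\le CR^2$ is precisely the conclusion \eqref{22222} you are trying to reach. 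Writing $X=\|\Dv E\|$ and $U=\|\u\|_{\mathcal{W}}$, the two estimates you propose to play off one another have the shape $X\le C'\bigl(R^2+RG+U\bigr)$ and $U\le C_2\bigl(R^2+X\bigr)$, so substitution yields $X\le C'C_2\,X+\cdots$, and there is no smallness in the product $C'C_2$ (these are fixed maximal-regularity and elliptic constants, independent of $R$), so the $X$ on the right cannot be absorbed. In short, you assume the conclusion to deduce the conclusion.

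The paper sidesteps this entirely by never returning to the momentum equation. Instead it exploits the structural constraints \eqref{Dv1} and \eqref{curl1}: from \eqref{curl1} one derives the identity \eqref{q1}, $\Dv E=\Dv E^\top-\mathcal{N}$, where $\mathcal{N}$ is a bona fide quadratic expression in $E$ and $\nabla E$, hence $\|\mathcal{N}\|_{L^p(0,T;L^q)}\le C\|E\|_{L^\infty}\|\nabla E\|_{L^p(0,T;L^q)}\le CR^2$ by Lemma \ref{E1}; and from \eqref{Dv1}, $\Dv E^\top=-\nabla\r-\Dv(\r E^\top)$, so \eqref{2222} and the smallness of $\r,E$ give $\|\Dv E^\top\|_{L^p(0,T;L^q)}\le CR^2$. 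This delivers $\|\Dv E\|\le CR^2$ directly, with no reference to $\u$ at all, and then $\|\nabla E\|\le C(\|\Dv E\|+\|\mathrm{curl}\,E\|)$ together with \eqref{5112}--\eqref{5113} gives \eqref{22222}. You should replace the bootstrap step by this use of the algebraic constraints; as written, the proposal has a genuine gap at the point where you claim $\|\u\|_{\mathcal{W}(0,T)}\le CR^2$.
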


\begin{proof}
First, note that the estimate \eqref{2222} is a direct consequence of Lemma \ref{E2} and Lemma \ref{E1}.

To show \eqref{22222}, according to \eqref{5112} and Lemma \ref{E1}, we only need to verify
$$\|\Dv E\|_{L^p(0,T; L^q(\O))}\le CR^2.$$  To this end,  we notice first that, since $\Dv\,\textrm{curl}f=0$ for any vector-valued function $f$, thus
$$(-\D)^{-1}\Dv\,\textrm{curl}\,\Dv E=0.$$
On the other hand, we also have, using the identity \eqref{curl1},
\begin{equation*}
 \begin{split}
 & (-\D)^{-1}\Dv\,\textrm{curl}\,\Dv E
 =(-\D)^{-1}\Dv(\partial_{x_k}\partial_{x_j}E_{ij}-\partial_{x_i}\partial_{x_j}E_{kj})\\
&=(-\D)^{-1}\Dv(\partial_{x_j}\partial_{x_j}E_{ik}-\partial_{x_j}\partial_{x_j}E_{ki})\\
&\quad+(-\D)^{-1}\Dv\partial_{x_j}(E_{lj}\nabla_lE_{ik}-E_lk\nabla_l E_{ij}-E_{lj}\nabla_lE_{ki}+E_li\nabla_l E_{kj})\\
&=\Dv E-\Dv E^\top+\mathcal{N}
 \end{split}
\end{equation*}
with
$$\mathcal{N}=(-\D)^{-1}\Dv\partial_{x_j}(E_{lj}\nabla_lE_{ik}-E_{lk}\nabla_l E_{ij}-E_{lj}\nabla_lE_{ki}+E_{li}\nabla_l E_{kj}).$$
Hence, we have
\begin{equation}\label{q1}
\Dv E=\Dv E^\top-\mathcal{N}.
\end{equation}

The property \eqref{Dv1} implies that
$$\Dv E^\top=-\nabla\r-\Dv(\r E^\top), $$
which together with \eqref{2222} yield
\begin{equation}\label{q2}
\begin{split}
 \|\Dv E^\top\|_{L^p(0,T; L^q(\O))}&=\|\nabla\r+\Dv(\r E^\top)\|_{L^p(0,T; L^q(\O))}\\
&\le \|\nabla\r\|_{L^p(0,T; L^q(\O))}+\|\r\|_{L^\infty}\|\nabla E\|_{L^p(0,T; L^q(\O))}\\
&\quad+\|E\|_{L^\infty}\|\nabla\r\|_{L^p(0,T; L^q(\O))}\\
&\le CR^2.
\end{split}
\end{equation}
For $\mathcal{N}$, we have, using Lemma \ref{E1},
\begin{equation}\label{q3}
 \begin{split}
  \|\mathcal{N}\|_{L^p(0,T; L^q(\O))}&\le 4\|E\|_{L^\infty}\|\nabla E\|_{L^p(0,T; L^q(\O))}\\
&\le CR^2
 \end{split}
\end{equation}

Combining \eqref{q1}, \eqref{q2}, and \eqref{q3}, we have
$$\|\Dv E\|_{L^p(0,T; L^q(\O))}\le CR^2.$$
The proof is complete.
\end{proof}

\bigskip

\section{Proof of Theorem \ref{T1}}

The goal of  this section is to prove Theorem \ref{T1}.
To this end, for a given $R>0$ small enough, we define
\begin{equation*}
\begin{split}
 T_{\textrm{max}}:=\sup\Big\{&T>0:\;  \textrm{there exists a solution $(\r,\u, E)$ to \eqref{e1} with }\|\u\|_{\mathcal{W}(0,T)}<R\textrm{ and }\\
&\max\{\|\r\|_{L^\infty(0,T; W^{1,q}(\O))}, \|E\|_{L^\infty(0,T; W^{1,q}(\O))}\}<R\Big\}.
\end{split}
\end{equation*}

We first show that in the interval $[0, T_{\textrm{max}}]$, we have better estimates on the velocity.

\begin{Lemma}\label{l51}
For any $T\in [0,T_{\textrm{max}}]$, we have
$$\|\u\|_{\mathcal{W}(0,T)}\le CR^2$$
for some positive constant $C$ independent of $T$.
\end{Lemma}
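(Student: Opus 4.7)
The plan is to apply the parabolic maximal regularity of Proposition \ref{T3} directly to the momentum equation for $\u$, and then to observe that the entire right-hand side can be controlled by $CR^2$ in $L^p(0,T;L^q(\O))$ thanks to the upgraded bounds provided by Corollary \ref{c1}.

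First I would rewrite \eqref{e12}, using the continuity equation \eqref{e11} to cancel the factor $(1+\r)$ in front of $\u_t$ and $\u\cdot\nabla\u$, and using the conservation identity $\Dv((1+\r)(I+E)^\top)=0$ of Proposition \ref{p1} to reduce $\Dv((1+\r)(I+E)(I+E)^\top)$ to $(1+\r)\Dv E + (1+\r)E_{jk}\partial_{x_j}E_{ik}$. The equation then takes the form
$$\u_t - \mu\Delta\u - (\mu+\lambda)\nabla\Dv\u = \mathcal{G},$$
with
$$\mathcal{G} := -\r\u_t - (1+\r)\u\cdot\nabla\u - \nabla P(1+\r) + (1+\r)\Dv E + (1+\r)E_{jk}\partial_{x_j}E_{ik}.$$
Proposition \ref{T3} gives
$$\|\u\|_{\mathcal{W}(0,T)} \le C\bigl(\|\mathcal{G}\|_{L^p(0,T;L^q(\O))} + \|\u_0\|_{V_0}\bigr)$$
with $C$ independent of $T$, and the initial-data hypothesis produces $\|\u_0\|_{V_0}\le R^2$ directly.

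To close, I would estimate each piece of $\mathcal{G}$ using the a priori bounds \eqref{a4} combined with the improved estimates $\|\nabla\r\|_{L^p(0,T;L^q(\O))}\le CR^2$ and $\|\nabla E\|_{L^p(0,T;L^q(\O))}\le CR^2$ (in particular $\|\Dv E\|_{L^p(0,T;L^q(\O))}\le CR^2$) from Corollary \ref{c1}. The two inertial terms give $\|\r\u_t\|_{L^pL^q}\le\|\r\|_{L^\infty}\|\u_t\|_{L^pL^q}\le CR\cdot R$, and, using the embedding $W^{2,q}\hookrightarrow W^{1,\infty}$ available because $q>3$, also $\|(1+\r)\u\cdot\nabla\u\|_{L^pL^q}\le C\|\u\|_{\mathcal{W}}^2\le CR^2$. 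The pressure term contributes $\|\nabla P(1+\r)\|_{L^pL^q}=\|P'(1+\r)\nabla\r\|_{L^pL^q}\le C\|\nabla\r\|_{L^pL^q}\le CR^2$. The linear elastic piece gives $\|(1+\r)\Dv E\|_{L^pL^q}\le C\|\Dv E\|_{L^pL^q}\le CR^2$, and the quadratic elastic piece gives $\|(1+\r)E_{jk}\partial_{x_j}E_{ik}\|_{L^pL^q}\le C\|E\|_{L^\infty}\|\nabla E\|_{L^pL^q}\le CR^3\le CR^2$. Summing yields $\|\mathcal{G}\|_{L^p(0,T;L^q(\O))}\le CR^2$, and hence $\|\u\|_{\mathcal{W}(0,T)}\le CR^2$ as claimed.

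The only delicate point, and the reason Corollary \ref{c1} is indispensable, is the linear-in-$E$ term $(1+\r)\Dv E$: under the bare a priori assumption $\|E\|_{L^\infty W^{1,q}}\le R$ this term would be of size $R$, so that a naive bootstrap would close at $O(R)$ rather than the required $O(R^2)$. The improvement $\|\Dv E\|_{L^pL^q}\le CR^2$ supplied by Corollary \ref{c1} is precisely what promotes the bound on $\u$ from first order to second order in $R$, which is exactly what is needed to absorb the iteration constant in the continuation argument that will finish the proof of Theorem \ref{T1}.
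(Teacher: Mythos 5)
Your proposal is correct and takes essentially the same route as the paper: rewrite \eqref{e12} using \eqref{Dv1} so that the elastic stress becomes $(1+\r)\Dv E + (1+\r)E_{jk}\partial_{x_j}E_{ik}$, apply the parabolic maximal regularity of Proposition~\ref{T3}, and bound each source term by $CR^2$ via the a priori assumptions \eqref{a4} together with the improved estimates \eqref{2222} and \eqref{22222} from Corollary~\ref{c1}. Your closing remark correctly identifies the role of Corollary~\ref{c1}: it is precisely the upgrade from $O(R)$ to $O(R^2)$ on $\nabla\r$ and $\Dv E$ that makes the bound close at second order, matching the paper's argument exactly.
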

\begin{proof}
In fact, by Proposition \ref{T3} and equation \eqref{e12}, we have
\begin{equation}\label{x6}
 \begin{split}
\|\u\|_{\mathcal{W}(0,T)}
&\le C(p,q,\O)\Big(\|\u_0\|_{V_0^{p,q}}+\|\r\partial_t\u\|_{L^p(0,T;L^q(\O))}\\
&\quad+\|(1+\r)\u\cdot\nabla\u\|_{L^p(0,T; L^q(\O))}\\
&\quad+\|\nabla P(1+\r)\|_{L^p(0,T;L^q(\O))}\\
&\quad+\|(1+\r)\Dv E\|_{L^p(0,T; L^q(\O))}\\
&\quad+\|(1+\r) E_{jk}\partial_{x_j} E_{ik}\|_{L^p(0,T; L^q(\O))}\Big).
\end{split}
\end{equation}
For those terms on the right hand side of \eqref{x6}, in view of Lemma \ref{E1} and Corollary \ref{c1},  we have the following estimates:
\begin{equation*}
 \|\r\partial_t\u\|_{L^p(0,T;L^q(\O))}\le \|\r\|_{L^\infty}\|\partial_t\u\|_{L^p(0,T;L^q(\O))}\le CR^2;
\end{equation*}
\begin{equation*}
 \begin{split}
  \|(1+\r)\u\cdot\nabla\u\|_{L^p(0,T; L^q(\O))}&\le (1+\|\r\|_{L^\infty})\|\u\|_{L^\infty(0,T; L^q(\O))}\|\nabla\u\|_{L^p(0,T; L^\infty)}\\
&\le C\|\u\|_{L^\infty(0,T; L^q(\O))}\|\u\|_{L^p(0,T; W^{2,q}(\O))}\\
&\le CR^2;
 \end{split}
\end{equation*}
\begin{equation*}
 \begin{split}
  \|\nabla P(1+\r)\|_{L^p(0,T;L^q(\O))}&\le \kappa\|\nabla\r\|_{L^p(0,T; L^q(\O))}\le CR^2;
 \end{split}
\end{equation*}
\begin{equation*}
 \begin{split}
\|(1+\r)\Dv E\|_{L^p(0,T; L^q(\O))}&\le (1+\|\r\|_{L^\infty})\|\Dv E\|_{L^p(0,T; L^q(\O))}\le CR^2;
 \end{split}
\end{equation*}
\begin{equation*}
 \begin{split}
  \|(1+\r) E_{jk}\partial_{x_j} E_{ik}\|_{L^p(0,T; L^q(\O))}&\le (1+\|\r\|_{L^\infty})\|E\|_{L^\infty}\|\nabla\E\|_{L^p(0,T; L^q(\O))}\\
&\le C\|E\|_{L^\infty}\|\nabla\E\|_{L^p(0,T; L^q(\O))}\\
&\le CR^2.
 \end{split}
\end{equation*}

Substituting those estimates back into \eqref{x6}, we have
\begin{equation*}
 \|\u\|_{\mathcal{W}(0,T)}\le C(\|\u_0\|_{V_0^{p,q}}+R^2)\le CR^2,
\end{equation*}
provided that we choose the initial data to satisfy
$$\|\u_0\|_{V_0^{p,q}}\le CR^2.$$
The proof is complete.

\end{proof}

According to Lemma \ref{l51}, we deduce from the above estimate that
$$\|\u\|_{\mathcal{W}(0,T_{\textrm{max}})}\le CR^2.$$

Next, we improve the estimates on the density and the deformation gradient. Due to the similarity of equations for the density and the deformation gradient, the arguments  for those two unknowns are similar, and hence for the clarity of the presentation, we will only focus on the argument for the improved estimates for the density. To this end, we introduce a new variable:
$$\sigma:=\nabla\ln \varrho,$$
and we have

\begin{Lemma}\label{sigma}
Function $\sigma$ satisfies
\begin{equation}\label{xx2}
\partial_t \sigma+\nabla(\u\cdot\sigma)=-\nabla\Dv\u,
\end{equation}
in the sense of distributions. Moreover, the norm
$\|\sigma(t)\|_{L^q(\O)}$ is continuous in time.
\end{Lemma}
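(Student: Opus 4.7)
The plan is to derive \eqref{xx2} by differentiating the transport equation for $\ln\varrho$ that one obtains from the continuity equation, and then to deduce continuity of $\|\sigma(t)\|_{L^q}$ via a DiPerna--Lions type renormalization of the resulting transport equation for $\sigma$.

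Since $\varrho=1+\r$ with $\|\r\|_{L^\infty}\le CR<\f{1}{2}$, the function $\ln\varrho$ is well-defined, bounded and belongs to $L^\infty(0,T;W^{1,q}(\O))$ with $\nabla\ln\varrho=\sigma$. Dividing the continuity equation by $\varrho$ yields in the sense of distributions
\[
\partial_t\ln\varrho+\u\cd\nabla\ln\varrho+\Dv\u=0;
\]
this is legitimate because the continuity equation gives $\r_t\in L^p(0,T;L^q(\O))$ and $\varrho^{-1}$ is a smooth Lipschitz function of $\r$ on its range. Taking $\nabla$ of this identity in $\mathcal{D}'$ produces \eqref{xx2} directly.

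To prepare for the renormalization step, I would reorganize \eqref{xx2} into transport form. Since $\sigma$ is itself a gradient, we have $\partial_j\sigma_i=\partial_i\sigma_j$ for all $i,j$, hence
\[
\nabla_j(\u\cd\sigma)=(\partial_j u_i)\sigma_i+u_i\partial_j\sigma_i=[(\nabla\u)^\top\sigma]_j+(\u\cd\nabla)\sigma_j,
\]
so \eqref{xx2} is equivalent to the transport equation
\[
\partial_t\sigma+(\u\cd\nabla)\sigma=-\nabla\Dv\u-(\nabla\u)^\top\sigma,
\]
whose right-hand side lies in $L^p(0,T;L^q(\O))$ using $\u\in L^p(0,T;W^{2,q})$ and the embedding $W^{2,q}\hookrightarrow W^{1,\infty}$ valid for $q>3$.

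Finally, for continuity of the norm, I would apply a DiPerna--Lions renormalization. Since $\u\in L^p(0,T;W^{1,\infty}(\O))$ with $\u|_{\partial\O}=0$ and $\sigma\in L^\infty(0,T;L^q(\O))$ solves the transport equation above with source in $L^p(0,T;L^q)$, mollification followed by the standard commutator lemma produces
\[
\partial_t|\sigma|^q+\Dv(\u|\sigma|^q)=|\sigma|^q\Dv\u+q|\sigma|^{q-2}\sigma\cd\bigl(-\nabla\Dv\u-(\nabla\u)^\top\sigma\bigr)
\]
in $\mathcal{D}'(Q_T)$. Integrating over $\O$, the transport term drops by the boundary condition on $\u$, and the right-hand side is controlled in $L^1(0,T)$ by $\|\sigma\|^{q-1}_{L^q}\|\nabla\Dv\u\|_{L^q}+\|\sigma\|^q_{L^q}\|\nabla\u\|_{L^\infty}$, which lies in $L^1(0,T)$ since $\|\sigma\|_{L^q}$ is bounded in time while $\|\nabla\Dv\u\|_{L^q}$ and $\|\nabla\u\|_{L^\infty}$ belong to $L^p(0,T)$. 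Consequently $\|\sigma(\cdot)\|_{L^q}^q\in W^{1,1}(0,T)\subset C([0,T])$, proving continuity. The main technical step is the renormalization itself, but with $\nabla\u\in L^\infty$ for almost every time it is standard.
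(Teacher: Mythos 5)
Your argument follows essentially the same route as the paper's proof, with the same ingredients deployed in a slightly different order. Both arguments reduce \eqref{xx2} to the continuity equation rewritten for $\ln\varrho$ (you make this step explicit; the paper takes the mollified form of \eqref{xx2} as its starting point, implicitly relying on the same formal derivation), both invoke the DiPerna--Lions commutator lemma (the paper cites Lemma 6.7 in \cite{AI}; you cite it as ``the standard commutator lemma''), and both conclude by deriving a differential inequality for $\|\sigma(t)\|_{L^q}$ that shows the time derivative lies in $L^1(0,T)$.

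The one small organizational difference worth noting: the paper deploys the mollification/commutator machinery \emph{before} establishing \eqref{xx2}, passing to the limit in the mollified equation, and then multiplies \eqref{xx2} by $|\sigma|^{q-2}\sigma$ and integrates without further comment; you instead obtain \eqref{xx2} directly by the chain rule on the continuity equation (which is indeed justified by the regularity $\r\in L^\infty(0,T;W^{1,q})$, $\r_t\in L^p(0,T;L^q)$) and then invoke the renormalization lemma to justify the step that produces the evolution equation for $|\sigma|^q$. In truth both steps require the same commutator estimate, and both proofs spell out only one of them, so the level of rigor is comparable. Your observation that the gradient structure of $\sigma$ (i.e.\ $\partial_j\sigma_i=\partial_i\sigma_j$) lets one pass from $\nabla(\u\cdot\sigma)$ to $(\u\cdot\nabla)\sigma$ plus a lower-order term is the same fact the paper uses implicitly when it integrates by parts to obtain the $-\f{1}{q}\Dv\u|\sigma|^q$ term. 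No gaps; this is the paper's argument, presented with the steps in a different order.
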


\begin{proof}
We follow the argument in \cite{AI} (Section 9.8) by denoting
$\sigma_\varepsilon=S_\varepsilon\sigma$, where $S_\varepsilon$ is
the standard mollifier in the spatial variables. Then, we have
$$\partial_t\sigma_\varepsilon+\nabla(\u\cdot\sigma_\varepsilon)=-\nabla\Dv\u_\varepsilon+\mathcal{R}_\varepsilon,$$
with
\begin{equation}\label{xxx1}
\begin{split}
\mathcal{R}_\varepsilon&=\nabla(\u\cdot\sigma_\varepsilon)-S_\varepsilon\nabla(\u\cdot\sigma)\\
&=(\u\cdot\nabla\sigma_\varepsilon-S_\varepsilon(\u\cdot\nabla\sigma))+(\sigma_\varepsilon\nabla \u-S_\varepsilon(\sigma\cdot\nabla \u))\\
&=:\mathcal{R}_\varepsilon^1+\mathcal{R}_\varepsilon^2.
\end{split}
\end{equation}
Since $\sigma\in L^\infty(0,T; L^q(\O))$ and $\u\in L^p(0,T;
W^{1,\infty}(\O))$, we deduce from Lemma 6.7 in \cite{AI} (cf.
Lemma 2.3 in \cite{LI}) that
$$\mathcal{R}_\varepsilon^1\to   0 \quad\text{as}\quad
\varepsilon\to   0.$$
  Moreover,
$$\|(\sigma_\varepsilon-\sigma)\nabla \u\|_{L^1(0,T; L^q(\O))}\le\|\sigma-\sigma_\varepsilon\|_{L^{\f{p}{p-1}}(0,T; L^q(\O))}
\|\nabla \u\|_{L^p(0,T; L^\infty(\O))}\to   0,$$ and
$$S_\varepsilon(\sigma\cdot\nabla \u)\to  \sigma\cdot\nabla \u\quad\text{in}\quad
L^1(0,T; L^q(\O))$$ since $\sigma\cdot\nabla \u\in L^p(0,T;
L^q(\O))$. Thus, we have $$\mathcal{R}_\varepsilon^2\to   0 \quad\text{in}\quad
L^p(0,T; L^q(\O)).$$ Then, taking the limit as $\varepsilon\to 0$
in \eqref{xxx1}, we get \eqref{xx2}.

Multiplying \eqref{xx2} by $|\sigma|^{q-2} \sigma$, and
integrating over $\O$, we get
\begin{equation*}
\begin{split}
\f{1}{q}\left|\f{d}{dt}\|\sigma\|^q_{L^q(\O)}\right|&=\left|\int_{\O}(-\nabla\Dv\u\cdot\sigma
|\sigma|^{q-2}-\partial_j\u_k \sigma_j\sigma_k|\sigma|^{q-2}
-\f{1}{q}\Dv \u|\sigma|^q)dx\right|\\
&\le \|\u\|_{W^{2,q}(\O)}\|\sigma\|_{L^q(\O)}^{q-1}+\|\nabla \u\|_{L^\infty}\|\sigma\|^q_{L^q}+\f{1}{q}\|\Dv \u\|_{L^\infty}\|\sigma\|_{L^q}^q\\
&\le C\|\u\|_{W^{2,q}}\|\sigma\|^{q-1}_{L^q}(1+\|\sigma\|_{L^q}).
\end{split}
\end{equation*}
Dividing the above inequality by $\|\sigma\|^{q-1}_{L^q}$, we
obtain
$$\left|\f{d}{dt}\|\sigma\|_{L^q}\right|\le C\|\u\|_{W^{2,q}}(1+\|\sigma\|_{L^q}).$$
Since $\sigma\in L^\infty(0, T; L^q(\O))$, we have
$$\f{d}{dt}\|\sigma\|_{L^q}\in L^p(0,T).$$
 Thus, $\|\sigma\|_{L^q}\in C(0,T).$
The proof of Lemma \ref{sigma} is
complete.
\end{proof}

With the aid of Lemma \ref{sigma}, we can improve the estimates on the density.

\begin{Lemma}\label{lll1}
For any $T\in [0,T_{\textrm{max}}]$, the density $\varrho$ satisfies
\begin{equation*}
 \|\nabla\varrho\|_{L^\infty(0,T; L^{q}(\O))}\le CR^{\f{3}{2}}
\end{equation*}
where the positive constant $C$ is independent of $T$.
\end{Lemma}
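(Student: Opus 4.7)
The plan is to use Lemma~\ref{sigma} to reduce the claim to controlling $\sigma=\nabla\ln\varrho$ in $L^\infty(0,T;L^q(\Omega))$. The bootstrap \eqref{a4} together with $W^{1,q}\hookrightarrow L^\infty$ (since $q>3$) forces $\|\r\|_{L^\infty(Q_T)}\le CR<1/2$, hence $\varrho\in[1/2,3/2]$ and $|\nabla\varrho|=\varrho|\sigma|\le C|\sigma|$. It thus suffices to prove $\|\sigma\|_{L^\infty(0,T;L^q)}\le CR^{3/2}$.

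I would perform an $L^q$-energy estimate on equation~\eqref{xx2}: multiplying by $|\sigma|^{q-2}\sigma$, integrating over $\Omega$, integrating by parts in the transport term $\nabla(\u\cdot\sigma)$ using $\u|_{\partial\Omega}=0$ (and rewriting it as $(\nabla\u)^{\top}\sigma+(\u\cdot\nabla)\sigma$, valid because $\sigma$ has zero curl), and applying Hölder yields, after dividing by $\|\sigma\|_{L^q}^{q-1}$,
\begin{equation*}
\frac{d}{dt}\|\sigma(t)\|_{L^q}\le C\|\nabla\u(t)\|_{L^\infty}\|\sigma(t)\|_{L^q}+C\|\nabla\Dv\u(t)\|_{L^q}\le C\|\u(t)\|_{W^{2,q}}\bigl(1+\|\sigma(t)\|_{L^q}\bigr),
\end{equation*}
where I invoked the Sobolev embedding $W^{2,q}\hookrightarrow W^{1,\infty}$ since $q>3$. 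Gronwall's inequality then produces $\|\sigma(t)\|_{L^q}\le\bigl(\|\sigma_0\|_{L^q}+CJ(T)\bigr)\exp(CJ(T))$ with $J(T)=\int_0^T\|\u(s)\|_{W^{2,q}}\,ds$ and the initial-datum bound $\|\sigma_0\|_{L^q}\le C\|\nabla\r_0\|_{L^q}\le CR^2$ coming from the smallness assumption in Theorem~\ref{T1}.

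The main obstacle is obtaining a $T$-independent bound on $J(T)$: a crude Hölder argument only gives $J(T)\le T^{1/p'}\|\u\|_{L^p(W^{2,q})}\le CT^{1/p'}R^2$, which grows with $T$. To bypass this, my plan is to substitute the momentum equation~\eqref{e12} for $\nabla\Dv\u$, expressing the right-hand side of the differential inequality as a combination of $\u_t$, $\nabla\r$, $\Dv E$, and quadratic remainders, each of which enjoys a uniform $L^p(0,T;L^q)$-bound of order $R^2$ thanks to Lemma~\ref{l51} and Corollary~\ref{c1}. Coupling these with the bootstrap hypothesis $\|\sigma\|_{L^\infty(L^q)}<R$ and the smallness $R<1$ should produce the extra half-power of $R$, yielding $\|\sigma\|_{L^\infty(0,T;L^q)}\le CR^{3/2}$ uniformly in $T\in[0,T_{\textrm{max}}]$. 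Finally, the bound on $\nabla\varrho$ follows from $\nabla\varrho=\varrho\sigma$ and the pointwise pinching of $\varrho$.
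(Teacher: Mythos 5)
Your setup (reducing to $\sigma=\nabla\ln\varrho$, invoking Lemma~\ref{sigma}, and deriving the differential inequality $\frac{d}{dt}\|\sigma\|_{L^q}\le C\|\u\|_{W^{2,q}}(1+\|\sigma\|_{L^q})$) matches the paper, and you have correctly located the obstruction: a plain Gronwall argument only controls $J(T)=\int_0^T\|\u\|_{W^{2,q}}\,dt$ through $T^{1-1/p}\|\u\|_{L^p(W^{2,q})}$, which is not uniform in $T$. The gap is in your proposed fix. Substituting the momentum equation \eqref{e12} for $\nabla\Dv\u$ does not remove the obstruction: in \eqref{e12} the terms $\mu\D\u$ and $(\mu+\lambda)\nabla\Dv\u$ appear together, so solving for $\nabla\Dv\u$ still leaves a $\D\u$ term of the same order; and more fundamentally, the substituted quantities ($\u_t$, $\nabla\r$, $\Dv E$, quadratic remainders) are all controlled only in $L^p(0,T;L^q)$, not in $L^1(0,T;L^q)$, so $\int_0^T(\cdots)\,dt$ still incurs the same $T^{1-1/p}$ factor. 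Inserting the bootstrap bound $\|\sigma\|_{L^\infty(L^q)}<R$ merely replaces the factor $(1+\|\sigma\|_{L^q})$ by a constant; it does not alter the time regularity of the integrand and produces no $T$-uniform estimate.

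The idea your plan is missing is to \emph{not} divide down to first power before integrating. The paper instead multiplies
$\tfrac1q\tfrac{d}{dt}\|\sigma\|_{L^q}^q\le C\|\sigma\|_{L^q}^{q-1}\|\u\|_{W^{2,q}}(1+\|\sigma\|_{L^q})$
by $\|\sigma\|_{L^q}^{p-q}$, which turns the left side into (a constant times) $\tfrac{d}{dt}\|\sigma\|_{L^q}^p$, and then integrates in time. H\"older's inequality with exponents $p/(p-1)$ and $p$ yields
\begin{equation*}
\|\sigma(T)\|_{L^q}^p\lesssim\|\sigma_0\|_{L^q}^p+\|\sigma\|_{L^p(0,T;L^q)}^{p-1}\,\|\u\|_{L^p(0,T;W^{2,q})}\bigl(1+\|\sigma\|_{L^\infty(0,T;L^q)}\bigr),
\end{equation*}
and now both time-integrated factors are uniform in $T$: $\|\sigma\|_{L^p(L^q)}\le C\|\nabla\r\|_{L^p(L^q)}\le CR^2$ by Corollary~\ref{c1}, and $\|\u\|_{L^p(W^{2,q})}\le CR^2$ by Lemma~\ref{l51}. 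This gives $\|\sigma(T)\|_{L^q}^p\le CR^{2p}(1+\|\sigma\|_{L^\infty(L^q)})$, and a continuation argument using the continuity of $t\mapsto\|\sigma(t)\|_{L^q}$ established in Lemma~\ref{sigma} pins $\|\sigma\|_{L^\infty(L^q)}$ below $R^{3/2}$ for $R$ small. Borrowing the weight $\|\sigma\|_{L^q}^{p-1}$ into the time integral so that the already-proved \emph{dissipative} $L^p(L^q)$ estimate on $\nabla\r$ can be exploited — rather than a pointwise-in-time bound — is the step that produces the $T$-independent constant, and it is absent from your proposal.
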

\begin{proof}
We only need to prove
$$\|\sigma\|_{L^\infty(0,T; L^q(\O))}\le CR^{\f{3}{2}}.$$
Indeed, multiplying \eqref{xx2} by $|\sigma|^{q-2}\sigma$ and integrating over $\O$ yields
\begin{equation}\label{q5}
\f{1}{q}\f{d}{dt}\|\sigma(t)\|_{L^q(\O)}^q=-\int_\O\nabla\Dv\u\cdot\sigma |\sigma|^{q-2}-\int_\O\nabla(\u\cdot\sigma)\cdot\sigma|\sigma|^{q-2}dx.
\end{equation}
Note that
\begin{equation*}
 \begin{split}
  \left|\int_\O\nabla\Dv\u\cdot\sigma |\sigma|^{q-2}\right|\le \|\D\u\|_{L^q(\O)}\|\sigma\|_{L^q(\O)}^{q-1};
 \end{split}
\end{equation*}
and
\begin{equation*}
 \begin{split}
  \int_\O\nabla(\u\cdot\sigma)\cdot\sigma|\sigma|^{q-2}dx=\int_\O\partial_{x_j}\u_k\sigma_k\sigma_j|\sigma|^{q-2}dx+I
 \end{split}
\end{equation*}
where
\begin{equation*}
 \begin{split}
I&=:\int_\O\partial_{x_k}\partial_{x_j}\partial_{x_k}(\ln\varrho)\partial_{x_j}\ln\varrho |\sigma|^{q-2}dx\\
&=\f{1}{2}\int_\O\u_k\partial_{x_k}\left(\sum_{j=1}^3\sigma_j^2\right)|\sigma|^{q-2}dx=\int_\O\u_k(\partial_{x_k}|\sigma|)|\sigma|^{q-1}dx\\
&=\f{1}{q}\int_\O\u_k\partial_{x_k}(|\sigma|^{q})dx=-\f{1}{q}\int_\O|\sigma|^q\Dv\u dx.
\end{split}
\end{equation*}
Hence,
$$\left|\int_\O\nabla(\u\cdot\sigma)\cdot\sigma|\sigma|^{q-2}dx\right|\le C\|\nabla\u\|_{L^\infty}\|\sigma\|_{L^q}^q\le C\|\u\|_{W^{2,q}(\O)}\|\sigma\|_{L^q(\O)}^q.$$

Substituting those estimates back into \eqref{q5}, one obtains
\begin{equation*}
\f{1}{q}\f{d}{dt}\|\sigma(t)\|_{L^q(\O)}^q\le C\|\sigma\|_{L^q}^{q-1}\|\u\|_{W^{2,q}(\O)}\left(1+\|\sigma\|_{L^q}\right).
\end{equation*}
Multiplying the above inequality by $\|\sigma\|_{L^q}^{p-q}$ and integrating over $[0,T]$, we have, using Corrollary \ref{c1} and Lemma \ref{l51},
\begin{equation}\label{q6}
 \begin{split}
  \f{1}{q}\|\sigma(T)\|_{L^q}^p&\le\f{1}{q}\|\sigma_0\|_{L^q}^p+C\|\sigma\|_{L^p(0,T; L^q(\O))}^{p-1}\|\u\|_{L^p(0,T; W^{2,q}(\O))}\left(1+\|\sigma\|_{L^\infty(0, T; L^q(\O))}\right)\\
&\le\f{1}{q}\|\sigma_0\|_{L^q}^p+CR^{2p}\left(1+\|\sigma\|_{L^\infty(0, T; L^q(\O))}\right)\\
&\le CR^{2p}\left(1+\|\sigma\|_{L^\infty(0, T; L^q(\O))}\right).
 \end{split}
\end{equation}
Thanks to Lemma \ref{sigma}, the continuity of $t\mapsto \|\sigma(t)\|_{L^q(\O)}$ implies that $$\|\sigma(t)\|_{L^q(\O)}\le R^{\f{3}{2}}$$ in some maximal interval $(0, T(R))\subset(0,T_{\textrm{max}})$. If $T(R)<T_{\textrm{max}}$, then we have
$$\|\sigma(T(R))\|_{L^q(\O)}= R^{\f{3}{2}}.$$ On the other hand,
\eqref{q6} implies
\begin{equation*}
\begin{split}
 R^{\f{3}{2}}&=\|\sigma(T(R))\|_{L^q(\O)}\le C^{\f{1}{p}}R^2\left(1+\|\sigma\|_{L^\infty(0, T; L^q(\O))}\right)\\
&\le C^{\f{1}{p}}R^2(1+R^{\f{3}{2}}).
\end{split}
\end{equation*}
This is impossible, if we assume that $R$ is so small that
$$C^{\f{1}{p}}R^{\f{1}{2}}(1+R^{\f{3}{2}})<1.$$
Thus, this implies that
$$\|\sigma\|_{L^\infty(0,T_{\textrm{max}}; L^q(\O))}\le R^{\f{3}{2}}.$$
The proof is complete.
\end{proof}

Similarly, we have
$$\|\nabla E\|_{L^\infty(0,T_{\textrm{max}}; L^{q}(\O))}\le C R^{\f32},$$
and hence
\begin{equation}\label{h1}
\max\{\|\nabla\r\|_{L^\infty(0,T_{\textrm{max}};
L^{q}(\O))},\|\nabla E\|_{L^\infty(0,T_{\textrm{max}};
L^{q}(\O))}\}\le C R^{\f32}.
\end{equation}

On the other hand, Proposition \ref{p2}, combined with the initial conditions \eqref{a2} and \eqref{a3}, imply that
\begin{equation}\label{h2}
\int_\O\r dx=0,\quad\textrm{and}\quad \int_\O(1+\r)E_{ij}=0,\quad\textrm{for all}\quad i,j=1,2,3.
\end{equation}
Poincare's inequality, combined with \eqref{h1} and \eqref{h2}, yields
\begin{equation*}
\max\{\|\r\|_{L^\infty(0,T_{\textrm{max}};
L^{q}(\O))},\|E\|_{L^\infty(0,T_{\textrm{max}};
L^{q}(\O))}\}\le C R^{\f32},
\end{equation*}
and thus
\begin{equation}\label{h}
\max\{\|\r\|_{L^\infty(0,T_{\textrm{max}};
W^{1,q}(\O))},\|E\|_{L^\infty(0,T_{\textrm{max}};
W^{1,q}(\O))}\}\le C R^{\f32}.
\end{equation}

\medskip

Now we are in a position to finish the proof of Theorem \ref{T1}.

\begin{proof}[Proof of Theorem \ref{T1}]
 Suppose that $T_{\textrm{max}}<\infty$. Let $T_n\nearrow T_{\textrm{max}}$ be arbitrary. Then necessarily
$$\|\u\|_{\mathcal{W}(0,T_n)}\nearrow R\quad\textrm{as}\quad n\rightarrow \infty.$$
Indeed, if this was not the case, then
$$\sup_{n}\|\u\|_{\mathcal{W}(0,T_n)}<R$$
and the function $\u$, defined on $(0,T_{\textrm{max}})$, would be a solution to the equation \eqref{e1e} on $(0, T_{\textrm{max}})$. Taking $\u(T_{\textrm{max}})$ as a new
initial condition, from Lemma \ref{T3}, we get $\u(T_{\textrm{max}})\in V_0^{p,q}$. And then, according to Theorem \ref{T20}, there exists a $T_0$ such that there exists a unique solution to \eqref{e1e} on $(T_{\textrm{max}}, T_{\textrm{max}}+T_0)$. This means we can extend the solution we obtained outside the interval $(0, T_{\textrm{max}})$ to $(0, T_{\textrm{max}}+T_0)$
with $\|\u\|_{\mathcal{W}(0,T_{\textrm{max}}+T_0)}<R$ and
$$\max\{\|\r\|_{L^\infty(0,T_{\textrm{max}}+T_0; W^{1,q}(\O))},\| E\|_{L^\infty(0,T_{\textrm{max}}+T_0; W^{1,q}(\O))}\}< R$$ for some $T_0>0$. This is a contradiction with the maximality of $T_{\textrm{max}}$. Thus, it is impossible to have
$$T_{\textrm{max}}<\infty.$$ Hence, the solution is defined for all positive time.

Finally, from the
estimate \eqref{h}, one has
$$\|\r\|_{L^\infty(0,\infty; W^{1,q}(\O))}<R$$
and
$$\|E\|_{L^\infty(0,\infty; W^{1,q}(\O))}<R,.$$

The proof is complete.
\end{proof}

\bigskip\bigskip

\section*{Acknowledgments}
D. Wang's research was supported in part by the National Science Foundation under grant DMS-0906160,
and by the Office of Naval Research under grant N00014-07-1-0668.

\bigskip\bigskip

\end{document}